\DeclareMathAlphabet{\mathpzc}{OT1}{pzc}{m}{it}
\newcommand\newcheck[1]{%
\savestack{\tmpbox}{\stretchto{%
  \scaleto{%
    \scalerel*[\widthof{\ensuremath{#1}}]{\kern-.6pt\bigwedge\kern-.6pt}%
    {\rule[-\textheight/2]{1ex}{\textheight}}
  }{\textheight}%
}{0.5ex}}%
\stackon[1pt]{#1}{\scalebox{-1}{\tmpbox}}%
}
\newcommand\newhat[1]{%
\savestack{\tmpbox}{\stretchto{%
  \scaleto{%
    \scalerel*[\widthof{\ensuremath{#1}}]{\kern-.6pt\bigwedge\kern-.6pt}%
    {\rule[-\textheight/2]{1ex}{\textheight}}
  }{\textheight}%
}{0.5ex}}%
\stackon[1pt]{#1}{\scalebox{1}{\tmpbox}}%
}
\def\cP{\mathscr{P}}
\def\BN{\mathbb{N}}
\def\BQ{\mathbb{Q}}
\def\BZ{\mathbb{Z}}
\mathchardef\mhyphen="2D
\def\adots{\mathinner{\mkern1mu\raise1.0pt\vbox{\kern7.0pt\hbox{.}}\mkern2mu\raise5.0pt\hbox{.}\mkern2mu\raise9.0pt\hbox{.}\mkern1mu}}
\newcommand{\cupdot}{\mathbin{\mathaccent\cdot\cup}}
\def\diag{\operatorname{diag}}
\def\dddots{\mathinner{\mkern1mu\raise10.0pt\vbox{\kern7.0pt\hbox{.}}\mkern2mu\raise5.3pt\hbox{.}\mkern2mu\raise1.0pt\hbox{.}\mkern1mu}}
\def\dddotssmall{\mathinner{\mkern1mu\raise7.0pt\vbox{\kern7.0pt\hbox{.}}\mkern-1mu\raise4pt\hbox{.}\mkern-1mu\raise1.0pt\hbox{.}\mkern1mu}}
\def\K0{\operatorname{K}_0}
\def\opp{\operatorname{op}}
\def\PSL2{\operatorname{PSL}_2}
\def\SL2{\operatorname{SL}_2}
\numberwithin{equation}{section}
\renewcommand{\labelenumi}{(\roman{enumi})}
\newtheorem{Lemma}{Lemma}[section]
\newtheorem{Proposition}[Lemma]{Proposition}
\theoremstyle{definition}
\newtheorem{Definition}[Lemma]{Definition}
\newtheorem{Setup}[Lemma]{Setup}
\newtheorem{Example}[Lemma]{Example}
\theoremstyle{theorem}
\newtheorem{ThmIntro}{Theorem}
\theoremstyle{definition}
\newtheorem*{bfhpg*}{}
\declaretheoremstyle[
notefont=\bfseries, notebraces={}{},
bodyfont=\normalfont,
headformat=\NUMBER~\NOTE,
headpunct={}
]{foobar}
\begin{document}

\setlength{\parindent}{0pt}
\setlength{\parskip}{7pt}

\title[Non-commutative friezes]{Non-commutative friezes and their determinants, the non-commutative Laurent phenomenon for weak friezes, and frieze gluing}

\author{Michael Cuntz}

\address{Michael Cuntz, Leibniz Universit\"at Hannover,
Institut f\"ur Algebra, Zahlentheorie und Dis\-krete Mathematik, Fakult\"at f\"ur Mathematik und Physik, Welfengarten 1, 30167 Hannover, Germany}
\email{cuntz@math.uni-hannover.de}

\urladdr{https://www.iazd.uni-hannover.de/de/cuntz}

\author{Thorsten Holm}

\address{Thorsten Holm, Leibniz Universit\"at Hannover,
Institut f\"ur Algebra, Zahlentheorie und Dis\-krete Mathematik, Fakult\"at f\"ur Mathematik und Physik, Welfengarten 1, 30167 Hannover, Germany}
\email{holm@math.uni-hannover.de}

\urladdr{https://www.iazd.uni-hannover.de/de/holm}

\author{Peter J\o rgensen}

\address{Peter J\o rgensen, Department of Mathematics, Aarhus University, Ny Munkegade 118, 8000 Aarhus C, Denmark}
\email{peter.jorgensen@math.au.dk}

\urladdr{https://sites.google.com/view/peterjorgensen}


\keywords{Cluster algebra, cluster expansion formula, Coxeter frieze, Dieudonn\'{e} determinant, generalised frieze, polygon dissection, skew field, $T$-path formula, weak frieze}

\subjclass[2020]{05E99, 13F60, 51M20}

\begin{abstract} 

This paper studies a non-commutative generalisation of Coxeter friezes due to Berenstein and Retakh.  It generalises several earlier results to this situation: A formula for frieze determinants, a $T$-path formula expressing the Laurent phenomenon, and results on gluing friezes together.  One of our tools is a non-commutative version of the weak friezes introduced by \c{C}anak\c{c}\i\ and J\o rgensen.

\end{abstract}

\maketitle

\setcounter{section}{-1}
\section{Introduction}
\label{sec:introduction}

Coxeter frieze patterns with entries in the integers were introduced in \cite[sec.\ 1]{Coxeter-frieze}; see Figure \ref{fig:frieze_pattern_integers}.  This paper studies a non-commutative generalisation due to Berenstein and Retakh \cite[sec.\ 1]{Berenstein-Retakh}; see also \cite[def.\ 3.1 and thm.\ 7.1]{Cuntz-Holm-Jorgensen-noncomm}.

Coxeter frieze patterns form a nexus between algebra, combinatorics, and geometry, and as shown by our bibliography there is a large literature on these patterns and their analogues with entries in commutative rings and their deformations; see
\cite{Assem-Dupont-Schiffler-Smith-friezes},
\cite{Assem-Reutenauer-Smith-friezes},
\cite{Baur-frieze-patterns},
\cite{Baur-Faber-Gratz-Serhiyenko-Todorov-mutation-survey},
\cite{Baur-Faber-Gratz-Serhiyenko-Todorov-SL_k},
\cite{Baur-Faber-Gratz-Serhiyenko-Todorov-mutation},
\cite{Baur-Fellner-Parsons-Tschabold},
\cite{Baur-Marsh-determinant},
\cite{Baur-Marsh-discs},
\cite{Baur-Parsons-Tschabold},
\cite{Bergeron-Reutenauer-SL_k},
\cite{Bessenrodt-walks},
\cite{Bessenrodt-Holm-Jorgensen-SL_2},
\cite{Bessenrodt-Holm-Jorgensen-friezes},
\cite{Broline-Crowe-Isaacs},
\cite{Canakci-Felikson-Garcia-Elsener-Tumarkin},
\cite{Canakci-Jorgensen},
\cite{Conway-Coxeter},
\cite{Coxeter-frieze},
\cite{Coxeter-regular-complex-polytopes},
\cite{Cuntz-combinatorial-model},
\cite{Cuntz-wild},
\cite{Cuntz-Boehmler},
\cite{Cuntz-Holm-subsets},
\cite{Cuntz-Holm-subpolygons},
\cite{Cuntz-Holm-Jorgensen-coefficients},
\cite{Cuntz-Holm-Pagano-algebraic},
\cite{Felikson-Tumarkin-surfaces},
\cite{Garcia-Elsener-Serhiyenko},
\cite{Guo-tropical},
\cite{Holm-Jorgensen-p-angulated},
\cite{Holm-Jorgensen-strip},
\cite{Holm-Jorgensen-determinant},
\cite{Holm-Jorgensen-generalised-friezes-I},
\cite{Holm-Jorgensen-generalised-friezes-II},
\cite{Morier-Genoud-counting},
\cite{Morier-Genoud-BLMS},
\cite{Morier-Genoud-Ovsienko-q-friezes},
\cite{Morier-Genoud-Ovsienko-Schwartz-Tabachnikov-linear-difference},
\cite{Morier-Genoud-Ovsienko-Tabachnikov-2-friezes},
\cite{Morier-Genoud-Ovsienko-Tabachnikov-supersymmetric},
\cite{Morier-Genoud-Ovsienko-Tabachnikov-SL2-tilings},
\cite{Propp}. 
There is a significant interface with cluster algebras, which is explained in \cite{Pressland-friezes-clusters}.

This paper goes in a different direction: We will  study non-commutative friezes and generalise se\-ve\-ral results from the commutative theory, including a formula for frieze determinants, a $T$-path formula expressing the Laurent phenomenon, and results on gluing friezes together.  One of our tools is a non-commutative version of the weak friezes introduced in \cite[def.\ 0.1(ii)]{Canakci-Jorgensen}.

We remark that Berenstein and Retakh's work has recently been generalised in a separate, cluster algebraic direction, see \cite{Greenberg-Kaufman-Niemeyer-Wienhard}.

\begin{center}
{\bf Coxeter frieze patterns}
\end{center}

Figure \ref{fig:frieze_pattern_integers} shows a Coxeter frieze pattern.  It is an infinite diagonal array with borders of zeroes and ones, and the remaining entries are positive integers such that each adjacent $2 \times 2$-matrix has determinant one.

Using the notation of Figure \ref{fig:frieze_pattern_abstract}, the entries of a portion of a Coxeter frieze pattern can be denoted $c_{ ij }$ where $1 \leqslant i,j \leqslant n$ and $i \neq j$.  The integer $n$ must be chosen to match the pattern in question; for instance, $n = 6$ for Figure \ref{fig:frieze_pattern_integers}.  We can view $c$ as a map defined on $\diag P$, the set of directed diagonals of an $n$-gon $P$.  To be precise, let the vertex set $V$ be numbered by $\{\, 1, \ldots, n \,\}$ where $1 < 2 < \cdots < n-1 < n < 1$ or $1 > 2 > \cdots > n-1 > n > 1$ in the cyclic order of the vertices.  Then
\[
  \diag P = \{\, ( i,j ) \,|\, \mbox{$i,j \in V$ and $i \neq j$} \,\},
\]
and $c$ is viewed as a map $c : \diag P \xrightarrow{} \BN$ by setting $c( i,k ) = c_{ ik }$.

It was proved by Coxeter himself in \cite[sec.\ 6]{Coxeter-frieze} that $c_{ ik } = c_{ ki }$ for each directed diagonal $( i,k )$, and it follows from \cite[thm.\ 3.3]{Cuntz-Holm-Jorgensen-coefficients} that if the directed diagonals $( i,k )$ and $( j,\ell )$ cross, then the Ptolemy relation
\begin{equation}
\label{equ:Ptolemy}
  c_{ ik }c_{ j\ell }
  = c_{ ij }c_{ \ell k } + c_{ i\ell }c_{ jk }
\end{equation}
is satisfied.  

\begin{center}
{\bf Non-commutative friezes after Berenstein and Retakh}
\end{center}

Equation \eqref{equ:Ptolemy} is part of the motivation for the following definition of friezes with values in a non-commutative ring, where Equations \eqref{equ:def:frieze:a} and \eqref{equ:def:frieze:b} are due to Berenstein and Retakh \cite[eqs.\ (1.2) and (1.3)]{Berenstein-Retakh}.

\begin{Definition}
[Friezes]
\label{def:frieze}
Let $P$ be a polygon, $R$ a ring with set of invertible elements $R^*$.  A map $c : \diag P \xrightarrow{} R^*$ is {\em a frieze} if the following are satisfied, where $c_{ ik }$ is short for $c( i,k )$.
\begin{enumerate}
\setlength\itemsep{4pt}

  \item  If $i$, $j$, $k$ are pairwise different vertices of $P$, then the {\em triangle relation}
\begin{equation}
\label{equ:def:frieze:a}	
  c_{ ij }c_{ kj }^{ -1 }c_{ ki } = c_{ ik }c_{ jk }^{ -1 }c_{ ji }  
\end{equation}
is satisfied.

  \item  If the directed diagonals $( i,k )$ and $( j,\ell )$ cross, then the {\em exchange relation}
\begin{equation}
\label{equ:def:frieze:b}
  c_{ ik } = c_{ ij }c_{ \ell j }^{ -1 }c_{ \ell k }
           + c_{ i \ell }c_{ j \ell }^{ -1 }c_{ jk }
\end{equation}
is satisfied.

\end{enumerate}
The relations are illustrated in Figure \ref{fig:def:frieze:a}.
\begin{figure}
\begin{tikzpicture}
  \matrix [column sep={0.7cm,between origins},row sep={0.7cm,between origins}]
  {
    \node{ $\dddots$ }; &&& \node{ $\dddots$ }; &&& \node{ $\dddots$ }; &&&&&&& \\
    & \node{ $0$ }; & \node{ $1$ }; & \node{ $2$ }; & \node{ $3$ }; & \node{ $1$ }; & \node{ $1$ }; & \node{ $0$ }; &&&&&& \\
    && \node{ $0$ }; & \node{ $1$ }; & \node{ $2$ }; & \node{ $1$ }; & \node{ $2$ }; & \node{ $1$ }; & \node{ $0$ }; &&&&& \\
    &&& \node{ $0$ }; & \node{ $1$ }; & \node{ $1$ }; & \node{ $3$ }; & \node{ $2$ }; & \node{ $1$ }; & \node{ $0$ }; &&&& \\
    &&&& \node{ $0$ }; & \node{ $1$ }; & \node{ $4$ }; & \node{ $3$ }; & \node{ $2$ }; & \node{ $1$ }; & \node{ $0$ }; &&& \\
    &&&&& \node{ $0$ }; & \node{ $1$ }; & \node{ $1$ }; & \node{ $1$ }; & \node{ $1$ }; & \node{ $1$ }; & \node{ $0$ }; && \\
    &&&&&& \node{ $0$ }; & \node{ $1$ }; & \node{ $2$ }; & \node{ $3$ }; & \node{ $4$ }; & \node{ $1$ }; & \node{ $0$ }; & \\
    &&&&&&& \node{ $\dddots$ }; &&& \node{ $\dddots$ }; &&& \node{ $\dddots$ }; \\
  };
\end{tikzpicture}
  \caption{A Coxeter frieze pattern.  There are borders of zeroes and ones, and the remaining entries are positive integers such that each adjacent $2 \times 2$-matrix has determinant one.}
\label{fig:frieze_pattern_integers}
\end{figure}
\begin{figure}
\begin{tikzpicture}
  \matrix [column sep={1.2cm,between origins},row sep={1.0cm,between origins}]
  {
    \node{ $0$ }; & \node{ $c_{ 12 }$ }; & \node{ $c_{ 13 }$ }; & \node{ $c_{ 14 }$ }; & \node{ $\cdots$ }; & \node{ $c_{ 1n }$ }; & \node{ $0$ }; \\
    & \node{ $0$ }; & \node{ $c_{ 23 }$ }; & \node{ $c_{ 24 }$ }; & \node{ $\cdots$ }; & \node{ $c_{ 2n }$ }; & \node{ $c_{ 21 }$ }; & \node{ $0$ }; \\
    && \node{ $0$ }; & \node{ $c_{ 34 }$ }; & \node{ $\cdots$ }; & \node{ $c_{ 3n }$ }; & \node{ $c_{ 31 }$ }; & \node{ $c_{ 32 }$ }; & \node{ $0$ }; \\
    &&& \node{ $\dddots$ }; & \node{ $\dddots$ }; & \node{ $\vdots$ }; & \node{ $\vdots$ }; & \node{ $\vdots$ }; & \node{ $\dddots$ }; & \node{ $\dddots$ }; \\
    &&&& \node{ $0$ }; & \node{ $c_{ n-1,n }$ }; & \node{ $c_{ n-1,1 }$ }; & \node{ $c_{ n-1,2 }$ }; & \node{ $\cdots$ }; & \node{ $c_{ n-1,n-2 }$ }; & \node{ $0$ }; \\
    &&&&& \node{ $0$ }; & \node{ $c_{ n,1 }$ }; & \node{ $c_{ n,2 }$ }; & \node{ $\cdots$ }; & \node{ $c_{ n,n-2 }$ }; & \node{ $c_{ n,n-1 }$ }; & \node{ $0$ }; \\
  };
\end{tikzpicture}
  \caption{Notation for the entries of a frieze pattern.}
\label{fig:frieze_pattern_abstract}
\end{figure}
\begin{figure}
\begingroup
\[
  \begin{tikzpicture}[scale=1.5]

    \begin{scope}[shift={(-2.8,0)}]
      \node[name=s, shape=regular polygon, regular polygon sides=13, minimum size=7.5cm, draw] {}; 
      \draw (90+4*360/13:2.75cm) node {$i$};
      \draw (90-5*360/13:2.75cm) node {$j$};
      \draw (90-1*360/13:2.85cm) node {$k$};
      \draw [red] (90+6*360/13:1.75cm) node {$c_{ ij }$};
      \draw [red] (90-3*360/13:1.85cm) node {$c_{ kj }^{ -1 }$};
      \draw [red] (90+1.5*360/13:1.25cm) node {$c_{ ki }$};
      \draw [blue] (90+6*360/13:1.0cm) node {$c_{ ji }$};
      \draw [blue] (90-3*360/13:1.0cm) node {$c_{ jk }^{ -1 }$};
      \draw [blue] (90+1.5*360/13:0.4cm) node {$c_{ ik }$};
      \begin{scope}[red,decoration={
    markings,mark=at position 0.45 with {\arrow[scale=2]{>}}}] 
        \draw[postaction={decorate}] (90+4*360/13:2.6cm)--(90-5*360/13:2.6cm);
        \draw[postaction={decorate}] (90-5*360/13:2.6cm)--(90-1*360/13:2.6cm);
        \draw[postaction={decorate}] (90-1*360/13:2.6cm)--(90+4*360/13:2.6cm);
      \end{scope}
      \begin{scope}[blue,decoration={
    markings,mark=at position 0.45 with {\arrow[scale=2]{>}}}] 
        \draw[postaction={decorate}] (90+4*360/13+1:2.35cm)--(90-1*360/13-1:2.35cm);
        \draw[postaction={decorate}] (90-1*360/13-1:2.35cm)--(90-5*360/13:2.35cm);
        \draw[postaction={decorate}] (90-5*360/13:2.35cm)--(90+4*360/13+1:2.35cm);
      \end{scope}
    \end{scope}

    \begin{scope}[shift={(2.8,0)}]
      \node[name=s, shape=regular polygon, regular polygon sides=13, minimum size=7.5cm, draw] {}; 
      \draw (90+4*360/13:2.75cm) node {$i$};
      \draw (90-5*360/13:2.80cm) node {$j$};
      \draw (90-1*360/13:2.85cm) node {$k$};
      \draw (90+1*360/13:2.85cm) node {$\ell$};

      \draw (180:0.95cm) node {$c_{ ik }$};
      
      \draw [red] (90-7*360/13:1.6cm) node {$c_{ ij }$};
      \draw [red] (90-2*360/13:0.7cm) node {$c_{ \ell j }^{ -1 }$};
      \draw [red] (90+0*360/13:2.0cm) node {$c_{ \ell k }$};
      \draw [blue] (90+2.5*360/13:1.65cm) node {$c_{ i\ell }$};
      \draw [blue] (90+4.5*360/13:0.15cm) node {$c_{ j\ell }^{ -1 }$};
      \draw [blue] (90-3*360/13:1.7cm) node {$c_{ jk }$};
      
      \draw[dotted,thick] (90+4*360/13:2.5cm) to (90-1*360/13:2.5cm);    
      \begin{scope}[red,decoration={
    markings,mark=at position 0.45 with {\arrow[scale=2]{>}}}] 
        \draw[postaction={decorate}] (90+4*360/13:2.5cm)--(90-5*360/13+1.5:2.5cm);
        \draw[postaction={decorate}] (90-5*360/13+1.5:2.5cm)--(90+1*360/13-1.5:2.5cm);
        \draw[postaction={decorate}] (90+1*360/13-1.5:2.5cm)--(90-1*360/13:2.5cm);
      \end{scope}
      \begin{scope}[blue,decoration={
    markings,mark=at position 0.45 with {\arrow[scale=2]{>}}}] 
        \draw[postaction={decorate}] (90+4*360/13:2.5cm)--(90+1*360/13+1.5:2.5cm);
        \draw[postaction={decorate}] (90+1*360/13+1.5:2.5cm)--(90-5*360/13-1.5:2.5cm);
        \draw[postaction={decorate}] (90-5*360/13-1.5:2.5cm)--(90-1*360/13:2.5cm);
      \end{scope}
    \end{scope}

  \end{tikzpicture} 
\]
\endgroup
\caption{The friezes of Definition \ref{def:frieze} satisfy the triangle relation $c_{ ij }c_{ kj }^{ -1 }c_{ ki } = c_{ ik }c_{ jk }^{ -1 }c_{ ji }$ (left) and the exchange relation $c_{ ik } = c_{ ij }c_{ \ell j }^{ -1 }c_{ \ell k } + c_{ i \ell }c_{ j \ell }^{ -1 }c_{ jk }$ (right).}
\label{fig:def:frieze:a}
\end{figure}
\end{Definition}

Friezes should perhaps be called non-commutative friezes, but the prefix will be omitted.  They were defined differently in \cite[def.\ 3.1]{Cuntz-Holm-Jorgensen-noncomm}, using a small subset of the equations in Definition \ref{def:frieze}, but the two definitions are equivalent by \cite[thm.\ 7.1]{Cuntz-Holm-Jorgensen-noncomm}.  If $c$ is a frieze, then Figure \ref{fig:frieze_pattern_abstract} will be called a frieze pattern.

It is a remarkable insight of Berenstein and Retakh that Equations \eqref{equ:def:frieze:a} and \eqref{equ:def:frieze:b} support a rich theory.  In particular, it is unobvious to include the triangle relation, but we shall see that it is an ingredient permitting the generalisation of several results from the commutative theory.

If $c_{ ik } = c_{ ki }$ for each directed diagonal $( i,k )$ and $R$ is commutative, then Equation \eqref{equ:def:frieze:a} is automatically satisfied, and Equation \eqref{equ:def:frieze:b} is equivalent to Equation \eqref{equ:Ptolemy}.  Hence Coxeter frieze patterns, viewed as having values in $\BQ$, provide examples of Definition \eqref{def:frieze}, but there are other, truly non-commutative examples.  The universal instance where \eqref{equ:def:frieze:a} and \eqref{equ:def:frieze:b} are viewed as relations in a free algebra is investigated in depth in \cite[sec.\ 2]{Berenstein-Retakh}.  A more concrete example with $R$ equal to the quaternions was given in \cite[sec.\ 1]{Cuntz-Holm-Jorgensen-noncomm}; it is reproduced in Figure \ref{fig:frieze_pattern_quaternions}.  In this example, $c_{ ik } \neq c_{ ki }$.
\begin{figure}
\begin{tikzpicture}
  \matrix [column sep={1.5cm,between origins},row sep={1.0cm,between origins}]
  {
     \node{ $0$ }; & \node{ $1$ }; & \node{ $i$ }; & \node{ $1-k$ }; & \node{ $-i-2j$ }; & \node{ $1$ }; & \node{ $0$ }; &&&&&& \\
    & \node{ $0$ }; & \node{ $1$ }; & \node{ $-2i-j$ }; & \node{ $3k$ }; & \node{ $-i+j$ }; & \node{ $1$ }; & \node{ $0$ }; &&&&& \\
    && \node{ $0$ }; & \node{ $1$ }; & \node{ $i-j$ }; & \node{ $k$ }; & \node{ $i$ }; & \node{ $1$ }; & \node{ $0$ }; &&&& \\
    &&& \node{ $0$ }; & \node{ $1$ }; & \node{ $j$ }; & \node{ $1+k$ }; & \node{ $-2i-j$ }; & \node{ $1$ }; & \node{ $0$ }; &&& \\
    &&&& \node{ $0$ }; & \node{ $1$ }; & \node{ $-i-2j$ }; & \node{ $-3k$ }; & \node{ $i-j$ }; & \node{ $1$ }; & \node{ $0$ }; && \\
    &&&&& \node{ $0$ }; & \node{ $1$ }; & \node{ $-i+j$ }; & \node{ $-k$ }; & \node{ $j$ }; & \node{ $1$ }; & \node{ $0$ }; & \\
  };
\end{tikzpicture}
  \caption{A frieze pattern with values in the quaternions.  Reproduced from \cite[sec.\ 1]{Cuntz-Holm-Jorgensen-noncomm}.}
\label{fig:frieze_pattern_quaternions}
\end{figure}

\begin{center}
{\bf Non-commutative frieze determinants}
\end{center}

Frieze determinants were introduced and computed by Broline, Crowe, and Isaacs in \cite[thm.\ 4]{Broline-Crowe-Isaacs}, and a beautiful generalisation was established by Baur and Marsh in \cite[thm.\ 1.3]{Baur-Marsh-determinant}.  We offer a non-commutative generalisation based on the notion of Dieudonn\'{e} determinant for which we refer the reader to \cite[sec.\ IV.1]{Artin-book}, \cite{Dieudonne}.

Our generalisation shows that, as in the commutative case, the determinant of the matrix $M_c$ in Figure \ref{fig:frieze_matrix} is independent of the bulk of the frieze $c : \diag P \xrightarrow{} R^*$, depending only on the size of $P$ and the values $c_{ 12 }$, $c_{ 23 }$, $\cdots$,  $c_{ n-1,n }$, $c_{ n1 }$ on the edges of $P$.  


\begin{ThmIntro}
[Non-commutative frieze determinants]
\label{thm:A}
Suppose that $R$ is a skew field.  Let the following be given.
\begin{itemize}
\setlength\itemsep{4pt}

  \item  $P$ is an $n$-gon for some integer $n \geqslant 3$ with vertex set numbered by $\{\, 1, \ldots, n \,\}$ such that $1 < 2 < \cdots < n-1 < n < 1$ or $1 > 2 > \cdots > n-1 > n > 1$ in the cyclic order of the vertices.

  \item  $c : \diag P \xrightarrow{} R^*$ is a frieze.

\end{itemize}
The Dieudonn\'{e} determinant of the matrix $M_c$ in Figure \ref{fig:frieze_matrix} is 
\[
  \det M_c =
  \overline{ -( -2 )^{ n-2 } }
  \cdot 
  \overline{ c_{ 12 }c_{ 23 } \cdots c_{ n-1,n }c_{ n1 } }.
\]
\begin{figure}
\[
  M_c
  =
  \left(
  \begin{array}{cccccc}
      0        & c_{ 12 } & c_{ 13 } & \cdots & c_{ 1,n-1 } & c_{ 1n } \\[2mm]
      c_{ 21 } & 0        & c_{ 23 } & \cdots & c_{ 2,n-1 } & c_{ 2n } \\[2mm]
      c_{ 31 } & c_{ 32 } & 0        & \cdots & c_{ 3,n-1 } & c_{ 3n } \\[2mm]
      \vdots   & \vdots   & \vdots   & & & \vdots \\[2mm]
      c_{ n-1,1 } & c_{ n-1,2 } & c_{ n-1,3 } & \cdots & 0 & c_{ n-1,n } \\[2mm]
      c_{ n1 } & c_{ n2 } & c_{ n3 } & \cdots & c_{ n,n-1 } & 0 \\
  \end{array}
  \right).
\]
  \caption{The square matrix $M_c$ arising from a map $c : \diag P \xrightarrow{} R$ where $P$ is an $n$-gon with vertex set numbered by $\{\, 1, \ldots, n \,\}$.  Theorem \ref{thm:A} computes $\det M_c$ when $c$ is a frieze, $R$ a skew field.}
\label{fig:frieze_matrix}
\end{figure}

\end{ThmIntro}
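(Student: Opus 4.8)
The plan is to prove the formula by induction on $n$. The mechanism is elementary row operations, and I will use the following standard properties of the Dieudonn\'e determinant over a skew field (\cite[sec.\ IV.1]{Artin-book}, \cite{Dieudonne}), where $\overline{x}$ denotes the class of $x$ in the abelian group $R^*/[R^*,R^*]$ in which the determinant takes its values: it is multiplicative; it is unchanged by adding a left multiple of one row to another; a swap of two rows, or of two columns, multiplies it by $\overline{-1}$; for a block-triangular matrix $\left(\begin{smallmatrix}B & 0\\ C & D\end{smallmatrix}\right)$ with $B,D$ square it equals $\det B\cdot\det D$; and, combining these, if a row of $A$ has a single nonzero entry $a$, in position $(p,q)$, then $\det A=\overline{(-1)^{p+q}}\cdot\overline{a}\cdot\det A^{(p,q)}$, where $A^{(p,q)}$ is $A$ with row $p$ and column $q$ removed. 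I also use that the restriction of a frieze to a subpolygon is a frieze --- immediate from Definition \ref{def:frieze}, as the triangle and exchange relations of the subpolygon are among those of $P$ --- and that relabelling the vertices of $P$ by a cyclic-order-preserving bijection permutes the rows and columns of $M_c$ simultaneously, hence leaves $\det M_c$ unchanged; so it suffices to carry out the inductive step by eliminating one fixed vertex.

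For $n\geqslant 4$ I would eliminate vertex $2$, which lies between the consecutive vertices $1$ and $3$. For each $k\in\{4,\ldots,n\}$ the directed diagonals $(2,k)$ and $(1,3)$ cross, the four vertices occurring in the cyclic order $2,3,k,1$, so the exchange relation \eqref{equ:def:frieze:b} reads $c_{2k}=c_{23}c_{13}^{-1}c_{1k}+c_{21}c_{31}^{-1}c_{3k}$; this same identity holds verbatim for $k\in\{1,3\}$, while for $k=2$ its right-hand side is $c_{23}c_{13}^{-1}c_{12}+c_{21}c_{31}^{-1}c_{32}$, whose two summands are equal by the triangle relation \eqref{equ:def:frieze:a} (with $i=2$, $j=3$, $k=1$), so it equals $2\,c_{23}c_{13}^{-1}c_{12}$. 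Therefore the row operation replacing row $2$ of $M_c$ by $(\text{row }2)-c_{23}c_{13}^{-1}\cdot(\text{row }1)-c_{21}c_{31}^{-1}\cdot(\text{row }3)$ leaves $\det M_c$ unchanged and turns row $2$ into a row whose only nonzero entry is $-2\,c_{23}c_{13}^{-1}c_{12}$, in column $2$. Expanding along this row,
\[
  \det M_c=\overline{-2\,c_{23}c_{13}^{-1}c_{12}}\cdot\det N ,
\]
where $N$ is $M_c$ with row $2$ and column $2$ deleted; and $N$ is precisely $M_{c'}$ for the frieze $c'=c|_{\diag P'}$ on the $(n-1)$-gon $P'$ with vertices $1,3,4,\ldots,n$ (edges $(1,3),(3,4),\ldots,(n-1,n),(n,1)$).

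The induction then closes by a calculation in $R^*/[R^*,R^*]$. By the inductive hypothesis $\det N=\overline{-(-2)^{n-3}}\cdot\overline{c_{13}c_{34}\cdots c_{n-1,n}c_{n1}}$, so, reordering factors freely, using $(-2)\bigl(-(-2)^{n-3}\bigr)=-(-2)^{n-2}$, and cancelling $c_{13}^{-1}$ against $c_{13}$,
\begin{align*}
  \det M_c
  &=\overline{-2\,c_{23}c_{13}^{-1}c_{12}}\cdot\overline{-(-2)^{n-3}}\cdot\overline{c_{13}c_{34}\cdots c_{n1}}\\
  &=\overline{-(-2)^{n-2}}\cdot\overline{c_{12}c_{23}c_{34}\cdots c_{n-1,n}c_{n1}} ,
\end{align*}
which is the claimed formula. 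The base case $n=3$ is the same computation, now with $N=\left(\begin{smallmatrix}0 & c_{13}\\ c_{31} & 0\end{smallmatrix}\right)$, so $\det N=\overline{-c_{13}c_{31}}$ and $\det M_c=\overline{2\,c_{23}c_{13}^{-1}c_{12}c_{13}c_{31}}=\overline{2\,c_{12}c_{23}c_{31}}=\overline{-(-2)^{1}}\cdot\overline{c_{12}c_{23}c_{31}}$. Finally, if $2=0$ in $R$ then $-2\,c_{23}c_{13}^{-1}c_{12}=0$, the row operation above produces a zero row, $M_c$ is singular, and $\det M_c=0=\overline{-(-2)^{n-2}}$; so the formula holds in every characteristic.

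The step I expect to be the main obstacle is getting the two non-commutative mechanisms to mesh. One must eliminate vertex $2$ in particular: that is the choice for which the new edge $(1,3)$ of $P'$ carries the value $c_{13}$ which exactly cancels the $c_{13}^{-1}$ that the triangle relation forces into the surviving $(2,2)$-entry --- eliminating vertex $1$ or vertex $n$ instead would leave an uncancelled frieze value. One must also apply the exchange relation with the vertices in the cyclic order of Definition \ref{def:frieze}, and track the sign of the Dieudonn\'e determinant under permutations; both become harmless once one works inside $R^*/[R^*,R^*]$ from the start, where multiplication commutes and $\overline{-1}$ is central. (This is presumably why, as hinted in the introduction, the triangle relation must be built into Definition \ref{def:frieze}: without it the surviving $(2,2)$-entry would be the unsimplified sum $c_{23}c_{13}^{-1}c_{12}+c_{21}c_{31}^{-1}c_{32}$, and no clean product formula could be read off.)
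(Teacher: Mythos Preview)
Your argument is correct. Both proofs proceed by induction on $n$ and row-reduce using the exchange relation, but they are organised differently. The paper first proves Proposition~\ref{pro:A}, a determinant factorisation valid for arbitrary \emph{weak} friezes (using only exchange relations), and then deduces Theorem~\ref{thm:A} by repeatedly peeling off a triangle and invoking the $n=3$ base case, where the triangle relation is used once. You instead act directly on a single row of $M_c$: the exchange relations for the crossings of $(2,k)$ with $(1,3)$ clear all off-diagonal entries of row~$2$, and the triangle relation collapses the surviving $(2,2)$-entry to $-2c_{23}c_{13}^{-1}c_{12}$, after which a cofactor expansion reduces to the minor $M_{c'}$ on the subpolygon with vertex~$2$ removed. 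Your route is more elementary and self-contained for Theorem~\ref{thm:A} itself; the paper's route is less direct but yields Proposition~\ref{pro:A} as a result of independent interest, since that proposition applies to weak friezes and underlies the non-commutative analogue of the Holm--J\o rgensen weak-frieze determinant formula.
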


We use the Dieudonn\'{e} determinant because $R$ is a skew field; it has values in $( R^*/[ R^*,R^* ] ) \cupdot \{\, 0 \,\}$, the abelianisation of the multiplicative group of $R$ with zero adjoined.  If $c$ is in $R^*$ then $\overline{ c }$ is the coset of $c$ in $R^*/[ R^*,R^* ]$, and if $c$ is $0$ then $\overline{c}$ is also $0$.  If $R$ is a field, then $( R^*/[ R^*,R^* ] ) \cupdot \{\, 0 \,\}$ is $R$ and the Dieudonn\'{e} determinant specialises to the usual determinant; see \cite[sec.\ 7]{Dieudonne}.  Note that, while $M_c$ depends on how the vertices of $P$ are numbered by $\{\, 1, \ldots, n \,\}$, its determinant $\det M_c$ does not: If the numbering of the vertices is subjected to a permutation $\sigma$, then $M_c$ changes by the application of $\sigma$ to its rows and to its columns, and this multiplies $\det M_c$ by $\overline{ s^2 } = \overline{ 1 }$ where $s$ is the sign of $\sigma$; see \cite[pp.\ 153--154, parts c) and j)]{Artin-book}.

Theorem \ref{thm:A} has \cite[thm.\ 4]{Broline-Crowe-Isaacs} and \cite[thm.\ 1.3]{Baur-Marsh-determinant} as special cases, because the friezes of those two papers can be viewed as having values in the relevant fields of fractions.

Note that the proof of Theorem \ref{thm:A} relies on Proposition \ref{pro:A}, which is a non-commutative generalisation of \cite[thm.\ 3.3]{Holm-Jorgensen-determinant} on weak frieze determinants.

\begin{center}
{\bf The non-commutative Laurent phenomenon}
\end{center}

The Laurent phenomenon was discovered by Fomin and Zelevinsky in their work on cluster algebras \cite[sec.\ 3]{Fomin-Zelevinsky-cluster-algebras-I}, and a non-commutative Laurent phenomenon for friezes was established by Berenstein and Retakh in \cite[thm.\ 2.10]{Berenstein-Retakh}.  We offer a generalisation based on the notion of $T$-path in the sense of \cite[def.\ 0.2]{Canakci-Jorgensen}, which is recalled in Definition \ref{def:T-path}.

Our generalisation shows that the Laurent phenomenon is independent of the triangle relation \eqref{equ:def:frieze:a}.

We first need the following definition, which is a non-commutative version of \cite[def.\ 0.1(ii)]{Canakci-Jorgensen}.

\begin{Definition}
[Weak friezes]
\label{def:weak_frieze}
Let $P$ be a polygon, $D$ a dissection of $P$, and $R$ a ring.  A map $c : \diag P \xrightarrow{} R$ is {\em a weak frieze with respect to $D$} if the following are satisfied.
\begin{enumerate}
\setlength\itemsep{4pt}

  \item  If $( t,v )$ is in $D$ then $c_{ tv }$ is in $R^*$.
  
  \item  If the directed diagonals $( i,k )$ and $( t,v )$ cross and $( t,v )$, $( v,t )$ are in $D$, then the exchange relation
\[
  c_{ ik } = c_{ it }c_{ vt }^{ -1 }c_{ vk }
           + c_{ iv }c_{ tv }^{ -1 }c_{ tk }
\]
is satisfied.
  
\end{enumerate}
\end{Definition}

Observe that a frieze is clearly a weak frieze with respect to any dissection.  Indeed, the adjective ``weak'' has been chosen because weak friezes are required to satisfy fewer exchange relations than friezes and no triangle relations at all.  Our generalisation of \cite[thm.\ 2.10]{Berenstein-Retakh} is the following.

\begin{ThmIntro}
[The non-commutative Laurent phenomenon]
\label{thm:B}
Let $P$ be a polygon, $D$ a dissection of $P$, and $c : \diag P \xrightarrow{} R$ a map where $R$ is a ring.

Statements (i) and (ii) below are equivalent.
\begin{enumerate}
\setlength\itemsep{4pt}

  \item  $c$ is a weak frieze with respect to $D$.

  \item  $c$ satisfies the $T$-path formula with respect to $D$; that is, if $i$ and $k$ are different vertices of $P$, then 
\[
  c_{ ik } = \sum_{ p \in \cP( D,i,k ) } c_p.
\]
$c$ also satisfies that if $( t,v )$ is in $D$, then $c_{ tv }$ is in $R^*$ (this is required for the formula to make sense).

\end{enumerate}
\end{ThmIntro}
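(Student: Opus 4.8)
The plan is to prove the equivalence by induction on the number of diagonals of $P$ crossed by the directed diagonal $(i,k)$, after first disposing of the case where $(i,k)$ crosses no diagonal of $D$. The key definitional input is Definition~\ref{def:T-path} (recalled later in the paper), so I would first unpack what a $T$-path $p \in \cP(D,i,k)$ is: a sequence of vertices starting at $i$ and ending at $k$, whose consecutive pairs are alternately sides/diagonals in $D$ and ``crossing'' diagonals of $P$, with the combinatorics set up so that the whole collection $\cP(D,i,k)$ records exactly the recursive structure of repeated exchange relations along $D$. The associated monomial $c_p$ is an alternating product $c_{a_0 a_1} c_{a_2 a_1}^{-1} c_{a_2 a_3} \cdots$ of the $c$-values and inverses along the path; note the \emph{order} of the factors matters now since $R$ is non-commutative, so I must be careful that the convention in Definition~\ref{def:T-path} matches the order in which factors appear when one iterates the exchange relation of Definition~\ref{def:weak_frieze}(ii).

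For the implication (i)$\Rightarrow$(ii): if $(i,k)$ is itself a side of $P$ or lies in $D$ (equivalently, crosses no diagonal of $D$), then $\cP(D,i,k)$ consists of a single trivial path and $c_p = c_{ik}$, so the formula holds tautologically; the condition that $c_{tv} \in R^*$ for $(t,v) \in D$ is just part~(i) of Definition~\ref{def:weak_frieze}. Otherwise, pick a diagonal $(t,v) \in D$ that $(i,k)$ crosses --- here I would choose it to be the first one met when travelling from $i$ to $k$, so that the sub-diagonals $(i,t)$, $(i,v)$, $(t,k)$, $(v,k)$ each cross strictly fewer diagonals of $D$ than $(i,k)$ does (they may cross zero). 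Apply the exchange relation to write $c_{ik} = c_{it}c_{vt}^{-1}c_{vk} + c_{iv}c_{tv}^{-1}c_{tk}$, apply the induction hypothesis to each of the four shorter diagonals, and then check that the resulting double sum $\sum_{p'} \sum_{p''} c_{p'} c_{vt}^{-1} c_{p''} + (\text{symmetric term})$ is precisely the re-indexed sum $\sum_{p \in \cP(D,i,k)} c_p$. This last identification is a combinatorial bijection between $\cP(D,i,k)$ and the disjoint union of ``concatenations'' of paths through the crossing point of $(i,k)$ with $(t,v)$; this is the heart of the $T$-path argument and is where I expect the real work to be --- essentially recapitulating, in the non-commutative setting, the proof of the commutative $T$-path formula (cf.\ \cite{Canakci-Jorgensen}), checking at each concatenation step that the factors are glued in the correct order and that the inverse $c_{vt}^{-1}$ (resp.\ $c_{tv}^{-1}$) sits in the right slot.

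For the converse (ii)$\Rightarrow$(i): the condition $c_{tv} \in R^*$ for $(t,v) \in D$ is given outright, so Definition~\ref{def:weak_frieze}(i) holds. For (ii) of that definition, suppose $(i,k)$ crosses some $(t,v)$ with $(t,v),(v,t) \in D$. I would apply the $T$-path formula (hypothesis) to $c_{ik}$ and separately to $c_{it}, c_{vk}, c_{iv}, c_{tk}$, then verify that the claimed exchange relation $c_{ik} = c_{it}c_{vt}^{-1}c_{vk} + c_{iv}c_{tv}^{-1}c_{tk}$ follows by the \emph{same} combinatorial bijection used above, now read in reverse: every $T$-path from $i$ to $k$ either crosses $(t,v)$ or not, and one shows directly from the structure of $\cP(D,\cdot,\cdot)$ that the paths split according to whether they pass through the $i$--$t$--$\cdots$ or the $i$--$v$--$\cdots$ side, matching the two summands. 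Here one must be a little careful: since $(t,v)$ is a diagonal of $D$ that $(i,k)$ crosses, a $T$-path for $c_{ik}$ genuinely uses $(t,v)$ as one of its ``$D$-steps'', and one needs the bijection with $\cP(D,i,t) \times \cP(D,v,k)$ (and the symmetric one) to be exact --- this is again the same lemma, so really the whole theorem rests on one clean combinatorial bijection for $T$-paths, stated once and applied in both directions. The main obstacle, as flagged, is bookkeeping the order of non-commutative factors (and the placement of the single inverse $c_{vt}^{-1}$) consistently through the concatenation of $T$-paths; once the bijection is stated with the factor order made explicit, both implications fall out by induction on the crossing number.
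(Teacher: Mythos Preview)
Your (i)$\Rightarrow$(ii) is essentially the paper's argument: you induct on the number of diagonals of $D$ crossed by $(i,k)$, the paper inducts on $|D|$ after first replacing $D$ by the set of diagonals crossed by $(i,k)$ (via \cite[Lem.~3.4]{Canakci-Jorgensen}), and in both cases the inductive step is the one-sided splitting identity recorded here as Lemma~\ref{lem:computation}, which is exactly your ``combinatorial bijection'' when $(t,v)$ is the first diagonal crossed.

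Your (ii)$\Rightarrow$(i), however, contains a real gap. You assert that for an \emph{arbitrary} $(t,v)\in D$ crossed by $(i,k)$, ``a $T$-path for $c_{ik}$ genuinely uses $(t,v)$ as one of its $D$-steps'', so that $\cP(D,i,k)$ bijects with $\big(\cP(D,i,t)\times\cP(D,v,k)\big)\sqcup\big(\cP(D,i,v)\times\cP(D,t,k)\big)$ by cutting at that step. This is false. In the hexagon on $\{1,\dots,6\}$ with $D=\{(2,5),(5,2),(2,6),(6,2)\}$ and $(i,k)=(1,4)$, the sequence $(1,6,2,4)$ is a valid $T$-path from $1$ to $4$ which contains neither $(2,5)$ nor $(5,2)$ as a step, even though $(2,5)\in D$ crosses $(1,4)$. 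The exchange relation at $(2,5)$ does still follow from the $T$-path formula, but only through a \emph{cancellation}: the pair $\big((1,6,2,5),(2,4)\big)\in\cP(D,1,5)\times\cP(D,2,4)$ contributes $c_{16}c_{26}^{-1}c_{25}\cdot c_{25}^{-1}\cdot c_{24}=c_{16}c_{26}^{-1}c_{24}=c_{(1,6,2,4)}$. So the bijection you actually need is not ``the same bijection read in reverse'' but a genuine two-sided extension of the $R$ and $S$ maps of Lemma~\ref{lem:computation}, allowing insertion and deletion of cancelling pairs on both sides of $(t,v)$; this is strictly more than what your (i)$\Rightarrow$(ii) argument established. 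The paper sidesteps the issue entirely: for (ii)$\Rightarrow$(i) it inducts on $|D|$ by choosing $(t,v)$ to be an ear of the dissection (so $D^1=\emptyset$), shows by induction that $c|_{\diag P^2}$ is a weak frieze with respect to $D^2$, and then invokes the \emph{uniqueness} clause of the gluing Lemma~\ref{lem:B} together with the one-sided Lemma~\ref{lem:computation} to identify $c$ with the unique weak frieze glued from $c|_{\diag P^1}$ and $c|_{\diag P^2}$.
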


We refer to Definitions \ref{def:T-path}, \ref{def:T-path_sets}, \ref{def:T-path_formula} for the notion of $T$-paths and the notation in part (ii) of the theorem and to Example \ref{exa:T-paths} for an illustration.  The point is that the $T$-path formula expresses a Laurent phenomenon because $c_p$ is a monomial in the $c_{ j\ell }$ and their inverses.

Theorem \ref{thm:B} has \cite[thm.\ 2.10]{Berenstein-Retakh} as a special case:  Assume that $c$ is a frieze, $D$ a triangulation.  Then $c$ is a weak frieze with respect to $D$, and it is easy to see that the $T$-paths with respect to $D$ are precisely the $D$-admissible sequences in the sense of \cite[def.\ 2.9]{Berenstein-Retakh}.  Hence the sum in Theorem \ref{thm:B}(ii) over the set $\cP( D,i,k )$ of $T$-paths becomes the sum in \cite[thm.\ 2.10]{Berenstein-Retakh} over the set $Adm_D( i,k )$ of admissible sequences.  

Note that Theorem \ref{thm:B} is a non-commutative version of \cite[thm.\ A]{Canakci-Jorgensen}.

\begin{center}
{\bf Gluing friezes and weak friezes}
\end{center}

The methods we will develop for the proof of Theorem \ref{thm:B} also permit us to prove the following two theorems on gluing friezes and weak friezes together.  

\begin{ThmIntro}
[Gluing weak friezes]
\label{thm:C}
Let the following be given.
\begin{itemize}
\setlength\itemsep{4pt}

  \item  $P$ is a polygon divided into subpolygons $P^1, \ldots, P^{ \kappa+1 }$ by the dissection
\begin{equation}
\label{equ:thm:C:1}
  \{\, ( t_1,v_1 ),( v_1,t_1 ), \ldots,
       ( t_{ \kappa },v_{ \kappa } ),( v_{ \kappa },t_{ \kappa } ) \,\}
\end{equation}
for some $\kappa \geqslant 0$.  See Figure \ref{fig:thm:C:a}.
\begin{figure}
\begingroup
\[
  \begin{tikzpicture}[scale=1]
      \node[name=s, shape=regular polygon, regular polygon sides=20, rotate=9, minimum size=8cm, draw] {}; 
      \draw [thick] (10*360/20:4cm) to (15*360/20:4cm);
      \draw [thick] (8*360/20:4cm) to (16*360/20:4cm);
      \draw [thick] (3*360/20:4cm) to (18*360/20:4cm);
      \draw (10*360/20:4.3cm) node { $t_1$ };
      \draw (15*360/20:4.3cm) node { $v_1$ };
      \draw (8*360/20:4.3cm) node { $t_2$ };
      \draw (16*360/20:4.3cm) node { $v_2$ };
      \draw (8*360/20-7:4.25cm) node { $\cdot$ };
      \draw (8*360/20-11:4.25cm) node { $\cdot$ };
      \draw (8*360/20-15:4.25cm) node { $\cdot$ };
      \draw (16*360/20+7:4.25cm) node { $\cdot$ };
      \draw (16*360/20+11:4.25cm) node { $\cdot$ };
      \draw (16*360/20+15:4.25cm) node { $\cdot$ };
      \draw (3*360/20:4.35cm) node { $t_{ \kappa }$ };
      \draw (18*360/20:4.35cm) node { $v_{ \kappa }$ };
      \draw (12.5*360/20:3.35cm) node { $P^1$ };
      \draw (12*360/20:2.05cm) node { $P^2$ };
      \draw (0.5*360/20:3.4cm) node { $P^{ \kappa+1 }$ };
      \draw (0.1cm,-0.3cm) node { $\cdot$ };
      \draw (-0.2cm,-0.45cm) node { $\cdot$ };
      \draw (-0.5cm,-0.6cm) node { $\cdot$ };      
  \end{tikzpicture} 
\]
\endgroup
\caption{Theorems \ref{thm:C} and \ref{thm:D} concern a polygon $P$ divided into subpolygons $P^1, \ldots, P^{ \kappa+1 }$ by the dissection $\{\, ( t_1,v_1 ),( v_1,t_1 ), \ldots, ( t_{ \kappa },v_{ \kappa } ), ( v_{ \kappa },t_{ \kappa } ) \,\}$.}
\label{fig:thm:C:a}
\end{figure}

  \item  $P^{ \alpha }$ has a dissection  $D^{ \alpha }$ for each $\alpha$ in $\{\, 1, \ldots, \kappa+1 \,\}$.
  
  \item  $c^{ \alpha } : \diag P^{ \alpha } \xrightarrow{} R$ is a weak frieze with respect to $D^{ \alpha }$ for each $\alpha$ in $\{\, 1, \ldots, \kappa+1 \,\}$.
  
\end{itemize}
Assume that the $c^{ \alpha }$ have values on the directed diagonals in the set \eqref{equ:thm:C:1} which are invertible and agree.  That is:
\begin{itemize}
\setlength\itemsep{4pt}

  \item  $c^{ \alpha }_{ rs }$ is in $R^*$ if $( r,s )$ is in the set \eqref{equ:thm:C:1} and in $\diag P^{ \alpha }$.

  \item  $c^{ \alpha }_{ rs } = c^{ \beta }_{ rs }$ if $( r,s )$ is in the set \eqref{equ:thm:C:1} and in the intersection of $\diag P^{ \alpha }$ and $\diag P^{ \beta }$.

\end{itemize}
Then there exists a unique map $c : \diag P \xrightarrow{} R$ satisfying the following conditions.
\begin{enumerate}
\setlength\itemsep{4pt}

  \item  $c \Big|_{ \diag P^{ \alpha } } = c^{ \alpha }$ for each $\alpha$ in $\{\, 1, \ldots, \kappa+1 \,\}$.

  \item  $c$ is a weak frieze  with respect to the dissection
\[  
  D = \{\, ( t_1,v_1 ),( v_1,t_1 ),\ldots, ( t_{ \kappa },v_{ \kappa } ),( v_{ \kappa },t_{ \kappa } ) \,\} \cupdot D^1 \cupdot \cdots \cupdot D^{ \kappa+1 }.
\]

\end{enumerate}
\end{ThmIntro}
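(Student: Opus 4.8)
The plan is to construct the map $c$ by gluing the $c^\alpha$ in the only way the first condition permits, and then to verify the weak frieze axioms with respect to $D$. First I would check that each directed diagonal $(i,k)$ of $P$ lies in $\diag P^\alpha$ for at least one $\alpha$: since the dissecting diagonals $(t_\beta,v_\beta)$ are non-crossing, any diagonal of $P$ either coincides with some $(t_\beta,v_\beta)$ or is confined to a single subpolygon $P^\alpha$; in the first case it lies in (typically two) subpolygons, in the second in exactly one. Define $c_{ik} := c^\alpha_{ik}$ for any $\alpha$ with $(i,k) \in \diag P^\alpha$. The compatibility hypothesis --- that the $c^\alpha$ agree on the diagonals in the set \eqref{equ:thm:C:1} --- guarantees this is well defined, since the only diagonals shared by two subpolygons are the $(t_\beta,v_\beta)$ and their reverses. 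Uniqueness of $c$ is then immediate from condition~(i).

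Next I would verify condition~(i) of Definition~\ref{def:weak_frieze} for $c$ with respect to $D$: if $(t,v) \in D$, either $(t,v)$ is one of the $(t_\beta,v_\beta)$ or $(v_\beta,t_\beta)$, in which case $c_{tv} \in R^*$ by the explicit invertibility hypothesis, or $(t,v) \in D^\alpha$ for some $\alpha$, in which case $c_{tv} = c^\alpha_{tv} \in R^*$ because $c^\alpha$ is a weak frieze with respect to $D^\alpha$. The substantive part is condition~(ii): given a directed diagonal $(i,k)$ of $P$ crossing a diagonal $(t,v)$ with $(t,v),(v,t) \in D$, I must show $c_{ik} = c_{it}c_{vt}^{-1}c_{vk} + c_{iv}c_{tv}^{-1}c_{tk}$. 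The key observation is that the crossing diagonal $(t,v)$ belongs to $D^\alpha$ for a unique $\alpha$ (if it is in some $D^\alpha$) or is one of the $(t_\beta,v_\beta)$ --- in either case there is a single subpolygon $P^\alpha$ containing both endpoints $t$ and $v$. Since $(i,k)$ crosses $(t,v)$, the vertices $i$ and $k$ lie on opposite sides of the line through $t$ and $v$; I would argue that $i$ and $k$, together with $t,v$, can all be "seen" from $P^\alpha$ in the sense that $(i,t),(i,v),(k,t),(k,v)$ are all diagonals or edges of $P$ — this is the geometric heart of the argument.

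The main obstacle is precisely this geometric step: $i$ and $k$ need not themselves be vertices of $P^\alpha$, so $c_{it}$ etc.\ are values of $c$ on diagonals that may run through several subpolygons, not values of $c^\alpha$. I would handle this by an induction on the number $\kappa$ of gluing diagonals, or equivalently on the number of subpolygons crossed by $(i,k)$. For $\kappa = 0$ there is nothing to prove. For the inductive step, pick a gluing diagonal $(t_\beta, v_\beta)$ and split $P$ along it into $P'$ (containing $P^\alpha$, where the crossing diagonal $(t,v)$ lives) and $P''$; restrict the already-glued data to each piece, apply the induction hypothesis to obtain that the restrictions are weak friezes, and then observe that the exchange relation for $(i,k)$ versus $(t,v)$ takes place — after possibly using the exchange relation at $(t_\beta,v_\beta)$ to "resolve" $(i,k)$ across the cut, which is legitimate because $(t_\beta,v_\beta),(v_\beta,t_\beta) \in D$ — entirely within $P'$, where $(t,v)$ is an honest dissecting diagonal of a smaller glued polygon. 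Alternatively, and perhaps more cleanly, I would invoke Theorem~\ref{thm:B}: it suffices to show $c$ satisfies the $T$-path formula with respect to $D$, and a $T$-path in $P$ with respect to $D$ decomposes along the gluing diagonals into $T$-paths in the individual $P^\alpha$ with respect to $D^\alpha$; summing the local $T$-path formulas $c^\alpha_{rs} = \sum_{p \in \cP(D^\alpha,r,s)} c^\alpha_p$ along a resolution of $(i,k)$ through the subpolygons then reassembles into $c_{ik} = \sum_{p \in \cP(D,i,k)} c_p$. This reduces the whole theorem to a combinatorial bijection between $\cP(D,i,k)$ and concatenations of local $T$-path systems, which I expect to be the cleanest route and the one whose only real content is bookkeeping about how $T$-paths cross the dissecting diagonals.
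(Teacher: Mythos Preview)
Your opening claim is false, and this breaks the whole construction. You assert that ``any diagonal of $P$ either coincides with some $(t_\beta,v_\beta)$ or is confined to a single subpolygon $P^\alpha$'', but this fails already for a square $P$ with vertices $1,2,3,4$ dissected by $\{(1,3),(3,1)\}$: the diagonal $(2,4)$ has one endpoint in $P^1 = \{1,2,3\}$ and the other in $P^2 = \{1,3,4\}$, so it lies in neither $\diag P^1$ nor $\diag P^2$. Your formula $c_{ik} := c^\alpha_{ik}$ therefore leaves $c$ undefined on every diagonal that crosses a diagonal of the dissection \eqref{equ:thm:C:1}, and your uniqueness claim ``immediate from condition~(i)'' is wrong for the same reason: condition~(i) says nothing about such diagonals.

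The paper's proof fixes exactly this gap. It reduces by induction to $\kappa = 1$, and for $\kappa = 1$ it \emph{defines} $c$ on the missing cross-diagonals by the exchange formula itself: for $i$ in $P^1 \setminus \{t,v\}$ and $k$ in $P^2 \setminus \{t,v\}$ one sets
\[
  c_{ik} := c^1_{it}(c^1_{vt})^{-1}c^2_{vk} + c^1_{iv}(c^1_{tv})^{-1}c^2_{tk},
\]
and symmetrically with $1$ and $2$ swapped. This is forced by condition~(ii), which is what actually gives uniqueness. The nontrivial work, carried out in Lemma~\ref{lem:B}, is then to check that this $c$ satisfies the exchange relations with respect to \emph{all} of $D = \{(t,v),(v,t)\} \cupdot D^1 \cupdot D^2$, in particular those coming from $D^1$ and $D^2$ when applied to a cross-diagonal $(i,k)$; this requires a short computation nesting two exchange relations. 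Your later paragraphs gesture at an induction on $\kappa$, which is indeed the right outer structure, but you never supply the base case $\kappa = 1$, and your sketch of the inductive step presupposes that $c$ has already been defined everywhere, which it has not. The alternative route via Theorem~\ref{thm:B} that you suggest is also viable in principle, but note that the paper proves Theorem~\ref{thm:B} using Lemma~\ref{lem:B}, so you would still need that lemma or an equivalent.
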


\begin{ThmIntro}
[Gluing friezes]
\label{thm:D}
Let the following be given.
\begin{itemize}
\setlength\itemsep{4pt}

  \item  $P$ is a polygon divided into subpolygons $P^1, \ldots, P^{ \kappa+1 }$ by the dissection
\begin{equation}
\label{equ:thm:D:1}
  \{\, ( t_1,v_1 ),( v_1,t_1 ), \ldots,
       ( t_{ \kappa },v_{ \kappa } ),( v_{ \kappa },t_{ \kappa } ) \,\}
\end{equation}
for some $\kappa \geqslant 0$.  See Figure \ref{fig:thm:C:a}.
  
  \item  $c^{ \alpha } : \diag P^{ \alpha } \xrightarrow{} R^*$ is a frieze for each $\alpha$ in $\{\, 1, \ldots, \kappa+1 \,\}$.
  
\end{itemize}
Assume that the $c^{ \alpha }$ have values on the directed diagonals in the set \eqref{equ:thm:D:1} which agree.  That is:
\begin{itemize}
\setlength\itemsep{4pt}

  \item  $c^{ \alpha }_{ tv } = c^{ \beta }_{ tv }$ if $( t,v )$ is in the set \eqref{equ:thm:D:1} and in the intersection of $\diag P^{ \alpha }$ and $\diag P^{ \beta }$.

\end{itemize}
By Theorem \ref{thm:C}, applied with each $D^{ \alpha }$ equal to $\emptyset$, there exists a unique map $c : \diag P \xrightarrow{} R$ satisfying the following conditions.
\begin{enumerate}
\setlength\itemsep{4pt}

  \item  $c \Big|_{ \diag P^{ \alpha } } = c^{ \alpha }$ for each $\alpha$ in $\{\, 1, \ldots, \kappa+1 \,\}$.

  \item  $c$ is a weak frieze  with respect to the dissection
\[  
  \{\, ( t_1,v_1 ),( v_1,t_1 ),\ldots, ( t_{ \kappa },v_{ \kappa } ),( v_{ \kappa },t_{ \kappa } ) \,\}.
\]

\end{enumerate}
If $c$ has values in $R^*$ then it is a frieze.
\end{ThmIntro}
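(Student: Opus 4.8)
The plan is to show that the glued map $c$, which Theorem \ref{thm:C} already provides as a weak frieze with respect to the dissection \eqref{equ:thm:D:1}, is in fact a frieze whenever it takes values in $R^*$; equivalently, I must verify the triangle relation \eqref{equ:def:frieze:a} for every triple of pairwise distinct vertices $i,j,k$ of $P$, and the full exchange relation \eqref{equ:def:frieze:b} for every crossing pair of directed diagonals of $P$. The key structural fact is that the dissection \eqref{equ:thm:D:1} is \emph{linear} (the subpolygons are glued in a chain $P^1,\ldots,P^{\kappa+1}$), so any diagonal $(i,k)$ of $P$ either lies inside a single subpolygon $P^\alpha$, or crosses a nonempty set of the gluing diagonals $(t_\beta,v_\beta)$ and thus "passes through" a consecutive run of subpolygons.

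First I would handle the triangle relation. Given pairwise distinct vertices $i,j,k$, consider the triangle they span. If $i,j,k$ all lie in one subpolygon $P^\alpha$, then $c$ restricted to $\diag P^\alpha$ equals $c^\alpha$, which is a frieze, so \eqref{equ:def:frieze:a} holds there. Otherwise the vertices are spread across several subpolygons; here I would argue by induction on the number of gluing diagonals separating them. The idea is to pick a gluing diagonal $(t_\beta,v_\beta)$ that separates, say, $i$ from $\{j,k\}$ (or separates the three vertices two-against-one), and use the already-established exchange relations of the weak frieze $c$ to rewrite $c_{ij}$, $c_{ki}$ (the entries "crossing" the cut) in terms of entries $c_{i t_\beta}$, $c_{i v_\beta}$ and entries lying strictly on the $\{j,k\}$ side, times entries involving only $t_\beta,v_\beta$ on the $i$ side. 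One then reduces the triangle identity for $(i,j,k)$ to triangle identities for triples with fewer separating gluing diagonals, plus triangle/exchange identities already known inside single subpolygons; the base case is the single-subpolygon case above. The bookkeeping here is where care is needed, but it is purely formal manipulation of the defining relations.

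Next, the exchange relation \eqref{equ:def:frieze:b} for a general crossing pair $(i,k)$, $(j,\ell)$ of $P$. If the diagonal $(j,\ell)$ together with its reverse lies in the dissection \eqref{equ:thm:D:1}, then this is exactly the weak-frieze exchange relation that $c$ satisfies by Theorem \ref{thm:C}(ii). If $(j,\ell)$ lies inside a single subpolygon $P^\alpha$ and $(i,k)$ also lies in $P^\alpha$, it follows from $c|_{\diag P^\alpha}=c^\alpha$ being a frieze. The remaining case is that $(i,k)$ crosses one or more gluing diagonals while $(j,\ell)$ does not, or both cross gluing diagonals; here I would again induct on the number of gluing diagonals met by $(i,k)$. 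One picks a gluing diagonal $(t,v):=(t_\beta,v_\beta)$ crossed by $(i,k)$, applies the weak-frieze exchange relation of $c$ at $(t,v)$ to express $c_{ik} = c_{it}c_{vt}^{-1}c_{vk} + c_{iv}c_{tv}^{-1}c_{tk}$, and then — using that $(j,\ell)$ lies on one side of this cut, so $j,\ell$ together with (one of $t$ or $v$) lie in a union of fewer subpolygons, or appeal to the triangle relation already proved to move the factors $c_{it}, c_{iv}$ past each other — reduces each summand to an instance of \eqref{equ:def:frieze:b} with strictly fewer crossings, closing the induction. The main obstacle, and the part I would spend the most effort on, is precisely this last reduction: correctly combining the weak-frieze exchange relation across a cut with the triangle relation to recover the \emph{particular} form $c_{ik} = c_{ij}c_{\ell j}^{-1}c_{\ell k} + c_{i\ell}c_{j\ell}^{-1}c_{jk}$ of the full exchange relation, since in the non-commutative setting the order of the three factors in each monomial must come out exactly right, and the triangle relation \eqref{equ:def:frieze:a} is the tool that licenses the needed rearrangements.
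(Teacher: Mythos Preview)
Your outline is plausible, but it underestimates where the real work lies and your induction scheme is not quite the right one.

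The paper does \emph{not} induct on the number of gluing diagonals separating a triple or crossed by a diagonal. Instead it first reduces to $\kappa=1$ (two subpolygons $P^1$, $P^2$ glued along $\{(t,v),(v,t)\}$), and then inducts on $|V^2|$, the number of vertices of $P^2$. The induction step is easy: split $P^2$ into two smaller pieces and apply the result twice. All the content is in the base case $|V^2|=3$, i.e.\ attaching a single triangle with new vertex $s$. There one must verify the triangle relation for triples $(s,j,k)$ with $j,k\in V^1$ and the exchange relation for quadruples $s<j<k<\ell$, and these are established by explicit computations of a page or so, using (a) the weak-frieze exchange relations across $(t,v)$ \emph{and} across internal diagonals of $P^1$ (available because $c^1$ is a frieze, hence a weak frieze for any dissection of $P^1$), together with (b) the inverted triangle relation of Lemma~\ref{lem:inverted_triangle_relations} inside $P^1$. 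The triangle relations are proved first, and then used as an ingredient in proving the remaining exchange relations.

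The gap in your proposal is exactly this base case. When $i$ is separated from $\{j,k\}$ by a single cut, your plan is to expand $c_{ij}$, $c_{ik}$, $c_{ji}$, $c_{ki}$ by the weak-frieze exchange relation across $(t,v)$ and then claim the resulting identity ``reduces'' to triangle identities with fewer separations. But expanding both sides gives four-term sums with cross terms mixing $t$ and $v$, and there is no further separation to remove; you are already at the bottom of your induction. What is actually needed is a specific sequence of substitutions: first use an exchange relation across an \emph{interior} diagonal $(t,k)$ of $P^1$ (not a gluing diagonal) to replace $c_{sj}$ and $c_{js}$, then apply the inverted triangle relation for $t,j,k$ in $P^1$ to collapse the result to $c_{st}c_{kt}^{-1}c_{ks}-c_{sk}c_{tk}^{-1}c_{ts}$, and finally expand across $(t,v)$ and use the triangle relation inside the triangle $P^2$. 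This is not routine bookkeeping; the non-commutative factor ordering only comes out right because of this particular sequence, and your proposal does not identify it. Similarly, for the exchange relation $c_{j\ell}=c_{jk}c_{sk}^{-1}c_{s\ell}+c_{js}c_{ks}^{-1}c_{k\ell}$ (the one with $s$ in the ``middle'' position), the paper's computation uses the \emph{already-proved} triangle relations for $c$ itself, so the order in which you establish things matters.
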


Note that Theorems \ref{thm:C} and \ref{thm:D} are non-commutative versions of \cite[thms.\ B and C]{Canakci-Jorgensen}.

The paper is organised into Section \ref{sec:definitions}, which provides definitions, Section \ref{sec:preliminary}, which proves some preliminary results, and Section \ref{sec:proofs}, which proves Theorems \ref{thm:A}, \ref{thm:B}, \ref{thm:C}, and \ref{thm:D}.

\section{Definitions}
\label{sec:definitions}

\begin{Setup}
\label{set:ring}
Throughout the paper, $R$ is a ring and $R^*$ is its set of invertible elements.
\end{Setup}

\begin{Definition}
[Polygons]
\label{def:background}
{$\;$}
\begin{enumerate}
\setlength\itemsep{4pt}

  \item  A {\em polygon} $P$ is a finite set $V$ of three or more {\em vertices} equipped with a cyclic ordering.  We frequently think of $P$ geometrically realised as a convex polygon in the Euclidean plane.  The number of vertices is written $|V|$.

  \item  A {\em subpolygon} $P'$ of $P$ is a subset $V'$ of $V$ of three or more vertices equipped with the induced cyclic ordering.

  \item  A {\em diagonal} of $P$ is an ordered pair $( i,k )$ of two different vertices in $V$.  The set of di\-a\-go\-nals of $P$ is denoted $\diag P$.  Note that the prefix ``directed'', used before ``diagonal'' in the introduction, will henceforth be omitted.

  \item  A diagonal $( i,k )$ is an {\em edge} if $i$ and $k$ are neighbours in $V$.  Otherwise, it is an {\em internal diagonal}.

  \item  The diagonals $( i,k )$ and $( j,\ell )$ of $P$ {\em cross} if $i$, $j$, $k$, $\ell$ are four pairwise different vertices in $V$ such that $i < j < k < \ell$ or $i < \ell < k < j$.  Note that $( i,k )$ does not cross itself or $( k,i )$ and that an edge crosses no diagonals at all.  

  \item  A {\em dissection} of $P$ is a set $D$ of internal diagonals satisfying that if $( i,k )$ is in $D$ then $( k,i )$ is in $D$, and that if the diagonals $( i,k )$ and $( j,\ell )$ are in $D$ then they do not cross.  The number of diagonals in $D$ is written $|D|$.
  
  \item  A dissection $D$ of $P$ defines subpolygons $P^1$, $P^2$, $\ldots$ whose edges are either diagonals in $D$ or edges of $P$; see Figure \ref{fig:thm:C:a}.

  \item  If $c : \diag P \xrightarrow{} R$ is a map, then we write $c_{ ik }$ for $c( i,k )$ and use the convention that $c_{ xx } = 0$.

\end{enumerate}
\end{Definition}

$T$-paths were originally defined by Schiffler with respect to triangulations; see \cite[def.\ 1]{Schiffler-expansion}.  More general versions appeared in the work by Gunawan, Musiker, Schiffler, Thomas, and Williams on cluster expansion formulae; see \cite[sec.\ 3]{Gunawan-Musiker}, \cite[sec.\ 4]{Musiker-Schiffler-Williams}, \cite[sec.\ 3.1]{Schiffler-unpunctured-II}, \cite[sec.\ 3.1]{Schiffler-Thomas-unpunctured}.  The following generalisation to dissections was introduced in \cite[def.\ 0.2]{Canakci-Jorgensen}.

\begin{Definition}
[$T$-paths]
\label{def:T-path}
Let $P$ be a polygon, $D$ a dissection of $P$.  A sequence $( p_1, \ldots, p_{ \pi } )$ of vertices of $P$ is a {\em $T$-path from $p_1$ to $p_{ \pi }$ with respect to $D$} if the following hold.
\begin{enumerate}
\setlength\itemsep{4pt}

  \item  $p_1$ and $p_{ \pi }$ are different vertices.
  
  \item  The sets $\{\, p_1,p_2 \,\}$, $\{\, p_2,p_3 \,\}$, $\ldots$, $\{\, p_{ \pi-1 },p_{ \pi } \,\}$ are pairwise different and each of them has two elements.
  
  \item  None of the diagonals $( p_{ \alpha },p_{ \alpha+1 } )$ crosses a diagonal in $D$.
  
  \item  Each of the diagonals $( p_{ 2\alpha },p_{ 2\alpha + 1 } )$ is in $D$ and crosses the diagonal $( p_1,p_{ \pi } )$, and the crossing points progress monotonically in the direction from $p_1$ to $p_{ \pi }$.  

\end{enumerate}
Note that if $( p_1, \ldots, p_{ \pi } )$ is a $T$-path then $\pi$ is even.  Indeed, if $\pi$ were odd, then $( p_{ \pi-1 },p_{ \pi } )$ would have the form $( p_{ 2\alpha },p_{ 2\alpha+1 } )$ for some $\alpha$, and then condition (iv) would imply the false statement that $( p_{ \pi-1 },p_{ \pi } )$ crossed $( p_1,p_{ \pi } )$.

See Example \ref{exa:T-paths} for an illustration.
\end{Definition}

\begin{Definition}
[The notation $\cP$]
\label{def:T-path_sets}
Let $P$ be a polygon, $D$ a dissection of $P$.
\begin{enumerate}
\setlength\itemsep{4pt}

  \item  If $i$ and $k$ are different vertices of $P$ then
\[
  \cP( D,i,k )
  =
  \Bigg\{\!\!
    \begin{array}{l}
      \mbox{ $( p_1, \ldots, p_{ \pi } )$ is a $T$-path } \\[1mm]
      \mbox{ with respect to $D$ }
    \end{array}
  \!\!\Bigg|
    \mbox{ $p_1 = i$ and $p_{ \pi } = k$ }
  \Bigg\}.
\]
  
  \item  If $i$ and $k$ are different vertices of $P$ and $( j_1, \ldots, j_{ \rho } )$ is a list of vertices of $P$ then
\[
  \cP( D,i,k )^{ ( j_1, \ldots, j_{ \rho } ) }
  = 
  \{\, ( p_1, \ldots, p_{ \pi } ) \in \cP( D,i,k ) 
  \,|\, ( p_1, \ldots, p_{ \rho } ) = ( j_1, \ldots, j_{ \rho } ) \,\}.
\]

  \item  The notation is extended by permitting the sign ``$\neq$''; for example,
\[
  \cP( D,i,k )^{ ( i,j,\neq \ell ) }
  =
  \{\, ( p_1, \ldots, p_{ \pi } ) \in \cP( D,i,k ) 
  \,|\, p_1 = i, p_2 = j, p_3 \neq \ell \,\}.
\]

\end{enumerate}
\end{Definition}

\begin{Definition}
[The $T$-path formula]
\label{def:T-path_formula}
Let $P$ be a polygon, $D$ a dissection of $P$.  Let $c : \diag P \xrightarrow{} R$ be a map such that $c_{ tv }$ is in $R^*$ if $( t,v )$ is in $D$.

\begin{enumerate}
\setlength\itemsep{4pt}

  \item  If $p = ( p_1, \ldots, p_{ \pi } )$ is a $T$-path with respect to $D$, then we set
\[
  c_p = c_{ p_1p_2 }c_{ p_3p_2 }^{ -1 }c_{ p_3p_4 } \cdots c_{ p_{ \pi-1 }p_{ \pi-2 } }^{ -1 }c_{ p_{ \pi-1 }p_{ \pi } }.
\]
Note that the inverse elements make sense.  Indeed, the diagonals $( p_3,p_2 )$, $( p_5,p_4 )$, $\ldots$, $( p_{ \pi-1 },p_{ \pi-2 } )$ are in $D$ by Definition \ref{def:background}(vi) because the diagonals $( p_2,p_3 )$, $( p_4,p_5 )$, $\ldots$, $( p_{ \pi-2 },p_{ \pi-1 } )$ are in $D$ by Definition \ref{def:T-path}(iv).
  
  \item  Let $i$ and $k$ be different vertices of $P$.  We say that {\em $c$ satisfies the $T$-path formula from $i$ to $k$ with respect to $D$} if
\begin{equation}
\label{equ:def:T-path_formula:a}
  c_{ ik } = \sum_{ p \in \cP( D,i,k ) } c_p.
\end{equation}
If this holds for each pair of different vertices, then we say that {\em $c$ satisfies the $T$-path formula with respect to $D$}.

\end{enumerate}
\end{Definition}

\begin{Example}
\label{exa:T-paths}
In Figure \ref{fig:exa:T-paths:a}, the top line shows a polygon $P$, a (red) polygon dissection
\[
  D = \{\, ( s,u ), ( u,s ), ( t,v ), ( v,t ) \,\},
\]  
and a diagonal $( i,k )$.  The bottom line shows the $T$-paths from $i$ to $k$ with respect to $D$.  
\begin{figure}
\begingroup
\[
  \begin{tikzpicture}[auto]

    \begin{scope}[shift={(0,4.1)}]
      \node[name=s, shape=regular polygon, regular polygon sides=13, minimum size=3cm, draw] {}; 
      \draw[dotted,thick] (90+5*360/13:1.5cm) to (90-1*360/13:1.5cm);    
      \draw (90+5*360/13:1.75cm) node {$i$};
      \draw (90-1*360/13:1.8cm) node {$k$};
      \draw (90-0*360/13:1.75cm) node {$t$};
      \draw (90-5*360/13:1.75cm) node {$v$};
      \draw (90+2*360/13:1.75cm) node {$s$};
      \draw (90-6*360/13:1.75cm) node {$u$};
      \draw[thick,red] (90-0*360/13:1.5cm) to (90-5*360/13:1.5cm);
      \draw[thick,red] (90+2*360/13:1.5cm) to (90-6*360/13:1.5cm);
    \end{scope}

    \begin{scope}[shift={(-5.7,0)}]
      \node[name=s, shape=regular polygon, regular polygon sides=13, minimum size=3cm, draw] {}; 
      \draw[dotted,thick] (90+5*360/13:1.5cm) to (90-1*360/13:1.5cm);    
      \draw (90+5*360/13:1.75cm) node {$i$};
      \draw (90-1*360/13:1.8cm) node {$k$};
      \draw (90-0*360/13:1.75cm) node {$t$};
      \draw (90-5*360/13:1.75cm) node {$v$};
      \draw (90+2*360/13:1.75cm) node {$s$};
      \draw (90-6*360/13:1.75cm) node {$u$};
      \begin{scope}[thick,decoration={
    markings,mark=at position 0.5 with {\arrow{>}}}] 
        \draw[postaction={decorate}] (90+5*360/13:1.5cm)--(90-6*360/13:1.5cm);
      \end{scope}
      \begin{scope}[thick,decoration={
    markings,mark=at position 0.4 with {\arrow{>}}}]       
        \draw[postaction={decorate},red] (90-6*360/13:1.5cm)--(90+2*360/13:1.5cm);
        \draw[postaction={decorate}] (90+2*360/13:1.5cm)--(90-5*360/13:1.5cm);
      \end{scope}
      \begin{scope}[thick,decoration={
    markings,mark=at position 0.5 with {\arrow{>}}}]       
        \draw[postaction={decorate},red] (90-5*360/13:1.5cm)--(90+0*360/13:1.5cm);
        \draw[postaction={decorate}] (90+0*360/13:1.5cm)--(90-1*360/13:1.5cm);
      \end{scope}
    \end{scope}

    \begin{scope}[shift={(-1.9,0)}]
      \node[name=s, shape=regular polygon, regular polygon sides=13, minimum size=3cm, draw] {}; 
      \draw[dotted,thick] (90+5*360/13:1.5cm) to (90-1*360/13:1.5cm);    
      \draw (90+5*360/13:1.75cm) node {$i$};
      \draw (90-1*360/13:1.8cm) node {$k$};
      \draw (90-0*360/13:1.75cm) node {$t$};
      \draw (90-5*360/13:1.75cm) node {$v$};
      \draw (90+2*360/13:1.75cm) node {$s$};
      \draw (90-6*360/13:1.75cm) node {$u$};
      \begin{scope}[thick,decoration={
    markings,mark=at position 0.5 with {\arrow{>}}}] 
        \draw[postaction={decorate}] (90+5*360/13:1.5cm)--(90-6*360/13:1.5cm);
      \end{scope}
      \begin{scope}[thick,decoration={
    markings,mark=at position 0.4 with {\arrow{>}}}]       
        \draw[postaction={decorate},red] (90-6*360/13:1.5cm)--(90+2*360/13:1.5cm);
      \end{scope}
      \begin{scope}[thick,decoration={
    markings,mark=at position 0.5 with {\arrow{>}}}]       
        \draw[postaction={decorate}] (90+2*360/13:1.5cm)--(90+0*360/13:1.5cm);
        \draw[postaction={decorate},red] (90+0*360/13:1.5cm)--(90-5*360/13:1.5cm);
        \draw[postaction={decorate}] (90-5*360/13:1.5cm)--(90-1*360/13:1.5cm);
      \end{scope}
    \end{scope}

    \begin{scope}[shift={(1.9,0)}]
      \node[name=s, shape=regular polygon, regular polygon sides=13, minimum size=3cm, draw] {}; 
      \draw[dotted,thick] (90+5*360/13:1.5cm) to (90-1*360/13:1.5cm);    
      \draw (90+5*360/13:1.75cm) node {$i$};
      \draw (90-1*360/13:1.8cm) node {$k$};
      \draw (90-0*360/13:1.75cm) node {$t$};
      \draw (90-5*360/13:1.75cm) node {$v$};
      \draw (90+2*360/13:1.75cm) node {$s$};
      \draw (90-6*360/13:1.75cm) node {$u$};
      \begin{scope}[thick,decoration={
    markings,mark=at position 0.5 with {\arrow{>}}}] 
        \draw[postaction={decorate}] (90+5*360/13:1.5cm)--(90+2*360/13:1.5cm);
      \end{scope}
      \begin{scope}[thick,decoration={
    markings,mark=at position 0.4 with {\arrow{>}}}]       
        \draw[postaction={decorate},red] (90+2*360/13:1.5cm)--(90-6*360/13:1.5cm);
        \draw[postaction={decorate}] (90-6*360/13:1.5cm)--(90+0*360/13:1.5cm);
      \end{scope}
      \begin{scope}[thick,decoration={
    markings,mark=at position 0.5 with {\arrow{>}}}]       
        \draw[postaction={decorate},red] (90+0*360/13:1.5cm)--(90-5*360/13:1.5cm);
        \draw[postaction={decorate}] (90-5*360/13:1.5cm)--(90-1*360/13:1.5cm);
      \end{scope}
    \end{scope}

    \begin{scope}[shift={(5.7,0)}]
      \node[name=s, shape=regular polygon, regular polygon sides=13, minimum size=3cm, draw] {}; 
      \draw[dotted,thick] (90+5*360/13:1.5cm) to (90-1*360/13:1.5cm);    
      \draw (90+5*360/13:1.75cm) node {$i$};
      \draw (90-1*360/13:1.8cm) node {$k$};
      \draw (90-0*360/13:1.75cm) node {$t$};
      \draw (90-5*360/13:1.75cm) node {$v$};
      \draw (90+2*360/13:1.75cm) node {$s$};
      \draw (90-6*360/13:1.75cm) node {$u$};
      \begin{scope}[thick,decoration={
    markings,mark=at position 0.5 with {\arrow{>}}}] 
        \draw[postaction={decorate}] (90+5*360/13:1.5cm)--(90+2*360/13:1.5cm);
      \end{scope}
      \begin{scope}[thick,decoration={
    markings,mark=at position 0.4 with {\arrow{>}}}]       
        \draw[postaction={decorate},red] (90+2*360/13:1.5cm)--(90-6*360/13:1.5cm);
      \end{scope}
      \begin{scope}[thick,decoration={
    markings,mark=at position 0.5 with {\arrow{>}}}]       
        \draw[postaction={decorate}] (90-6*360/13:1.5cm)--(90-5*360/13:1.5cm);
        \draw[postaction={decorate},red] (90-5*360/13:1.5cm)--(90+0*360/13:1.5cm);
        \draw[postaction={decorate}] (90+0*360/13:1.5cm)--(90-1*360/13:1.5cm);
      \end{scope}
    \end{scope}

  \end{tikzpicture} 
\]
\endgroup
\caption{Top: A (red) polygon dissection $D = \{\, ( s,u ), ( u,s ), ( t,v ), ( v,t ) \,\}$ and a diagonal $( i,k )$.  Bottom: The $T$-paths from $i$ to $k$ with respect to $D$.  See Definition \ref{def:T-path}.}
\label{fig:exa:T-paths:a}
\end{figure}
If $c : \diag P \xrightarrow{} R$ is a map such that $c_{ su }$, $c_{ us }$, $c_{ tv }$, $c_{ vt }$ are in $R^*$, then $c$ satisfies the $T$-path formula \eqref{equ:def:T-path_formula:a} from $i$ to $k$ with respect to $D$ if
\[
  c_{ ik } =
  c_{ iu }c_{ su }^{ -1 }c_{ sv }c_{ tv }^{ -1 }c_{ tk } +
  c_{ iu }c_{ su }^{ -1 }c_{ st }c_{ vt }^{ -1 }c_{ vk } +
  c_{ is }c_{ us }^{ -1 }c_{ ut }c_{ vt }^{ -1 }c_{ vk } +
  c_{ is }c_{ us }^{ -1 }c_{ uv }c_{ tv }^{ -1 }c_{ tk }.
\]
By Theorem \ref{thm:B}, this holds if $c$ is a weak frieze with respect to $D$.  In particular, it holds if $c$ is a frieze.
\end{Example}

\section{Preliminary results}
\label{sec:preliminary}

\begin{Lemma}
[Alternative definition of friezes]
\label{lem:frieze}
In Definition \ref{def:frieze}, conditions (i) and (ii) are equivalent to the following conditions, respectively:
\begingroup
\renewcommand{\labelenumi}{(\roman{enumi}')}
\begin{enumerate}
\setcounter{enumi}{0}

  \item    If $i < j < k$ are pairwise different vertices of $P$, then the triangle relation
\[
  c_{ ij }c_{ kj }^{ -1 }c_{ ki } = c_{ ik }c_{ jk }^{ -1 }c_{ ji }  
\]
is satisfied.

  \item    If $i < j < k < \ell$ are pairwise different vertices of $P$, then the exchange relation
\[
  c_{ ik } = c_{ ij }c_{ \ell j }^{ -1 }c_{ \ell k }
           + c_{ i \ell }c_{ j \ell }^{ -1 }c_{ jk }
\]
is satisfied.
  
\end{enumerate}
\endgroup
\end{Lemma}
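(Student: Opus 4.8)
\textit{Proof plan.}

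The plan is to treat the two equivalences separately, and in each case to note that the implication from the unprimed to the primed condition is a triviality: condition (i$'$) is just the restriction of (i) to triples already in cyclic order, and condition (ii$'$) is the restriction of (ii) to the crossing configurations with $i<j<k<\ell$, which are indeed crossings by Definition~\ref{def:background}(v). So all the content is in the reverse implications, where a relation known only for tuples in cyclic order must be promoted to arbitrary tuples of distinct vertices.

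For (i$'$) $\Rightarrow$ (i) the key point I would establish first is that the triangle relation $c_{ij}c_{kj}^{-1}c_{ki}=c_{ik}c_{jk}^{-1}c_{ji}$ is invariant under every permutation of $i,j,k$. Invariance under the swap $j\leftrightarrow k$ is immediate, since that swap merely interchanges the two sides of the equation. Invariance under the swap $i\leftrightarrow j$ is a short computation valid in any ring: invert both sides (legitimate, as all the entries appearing lie in $R^*$), then multiply on the left by $c_{ji}$ and on the right by $c_{ij}$; what comes out is precisely the triangle relation for the reordered triple, and every step is reversible. Since these two transpositions generate $S_3$, the triangle relation for a given ordered triple is equivalent to the triangle relation for any reordering of it. Now, given pairwise different vertices $i,j,k$, at least one reordering of them is increasing in the cyclic order, so (i$'$) supplies the triangle relation for that reordering, and the permutation invariance just established transports it back to $(i,j,k)$.

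For (ii$'$) $\Rightarrow$ (ii), suppose the diagonals $(i,k)$ and $(j,\ell)$ cross. By Definition~\ref{def:background}(v) the vertices $i,j,k,\ell$ are pairwise different and either $i<j<k<\ell$ or $i<\ell<k<j$ in the cyclic order. In the first case (ii$'$) applies to $i,j,k,\ell$ verbatim and yields exactly the exchange relation required by (ii). In the second case I would instead apply (ii$'$) to the vertices taken in the order $i,\ell,k,j$ --- which is now increasing in the cyclic order --- obtaining $c_{ik}=c_{i\ell}c_{j\ell}^{-1}c_{jk}+c_{ij}c_{\ell j}^{-1}c_{\ell k}$; since addition in $R$ is commutative, this is the exchange relation of (ii), merely with its two summands written in the opposite order.

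I do not expect a genuine obstacle. The only place where non-commutativity is actually felt is the $i\leftrightarrow j$ symmetry of the triangle relation, and that is a one-line manipulation with inverses. Beyond that, the one thing needing care is the cyclic-order bookkeeping: correctly enumerating the two possible crossing configurations, and checking that the relabelling $(i,j,k,\ell)\mapsto(i,\ell,k,j)$ genuinely falls within the scope of hypothesis~(ii$'$).
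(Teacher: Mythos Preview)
Your proposal is correct, and for (ii$'$) $\Rightarrow$ (ii) it is essentially identical to the paper's argument. For (i$'$) $\Rightarrow$ (i) you prove full $S_3$-invariance of the triangle relation, using both the trivial swap $j\leftrightarrow k$ and the nontrivial swap $i\leftrightarrow j$ (your inversion-and-conjugation manipulation, which is indeed correct). The paper instead takes a shorter route: since the vertex set carries a cyclic order, any triple of distinct vertices satisfies exactly one of $i<j<k$ or $i<k<j$, so only the swap $j\leftrightarrow k$ --- which merely interchanges the two sides of the equation --- is ever needed, and your $i\leftrightarrow j$ computation, while valid, can be dispensed with. Your approach has the modest advantage of establishing the full symmetry of the relation as a standalone fact; the paper's approach is more economical for the immediate purpose.
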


\begin{proof}
The conditions in Definition \ref{def:frieze} are stronger than their counterparts in the lemma.

Conversely, if $i$, $j$, $k$ are pairwise different vertices of $P$, then either $i < j < k$, in which case (i') is simply Definition \ref{def:frieze}(i), or $i < k < j$, in which case (i') gives
\[
  c_{ ik }c_{ jk }^{ -1 }c_{ ji } = c_{ ij }c_{ kj }^{ -1 }c_{ ki },
\]
yielding Definition \ref{def:frieze}(i) by swapping the left and right hand sides.

Likewise, if $( i,k )$ and $( j,\ell )$ cross, then $i$, $j$, $k$, $\ell$ are pairwise different vertices of $P$ and either $i < j < k < \ell$, in which case (ii') is simply Definition \ref{def:frieze}(ii), or $i < \ell < k < j$, in which case (ii') gives
\[
  c_{ ik } = c_{ i\ell }c_{ j\ell }^{ -1 }c_{ jk }
           + c_{ ij }c_{ \ell j }^{ -1 }c_{ \ell k },
\]
yielding Definition \ref{def:frieze}(ii) by swapping the terms on the right hand side.
\end{proof}

\begin{Lemma}
[Alternative definition of weak friezes]
\label{lem:weak_frieze}
In Definition \ref{def:weak_frieze}, condition (ii) is e\-qui\-va\-lent to the following:
\begingroup
\renewcommand{\labelenumi}{(\roman{enumi}')}
\begin{enumerate}
\setcounter{enumi}{1}

  \item  If $i < j < k < \ell$ are pairwise different vertices of $P$ and $( j,\ell )$, $( \ell,j )$ are in $D$, then the exchange relation
\[
  c_{ ik } = c_{ ij }c_{ \ell j }^{ -1 }c_{ \ell k }
           + c_{ i \ell }c_{ j \ell }^{ -1 }c_{ jk }
\]
is satisfied.

\end{enumerate}
\endgroup
\end{Lemma}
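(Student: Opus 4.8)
The plan is to follow the pattern of the proof of Lemma~\ref{lem:frieze}, relabelling vertices while keeping track of membership in the dissection $D$.

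First I would check that condition (ii) of Definition~\ref{def:weak_frieze} implies (ii'): given $i < j < k < \ell$ with $( j,\ell ),( \ell,j )$ in $D$, the diagonals $( i,k )$ and $( j,\ell )$ cross by Definition~\ref{def:background}(v), so applying (ii) with $( t,v ) = ( j,\ell )$ produces exactly the exchange relation asserted in (ii').

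Conversely, assuming (ii'), suppose $( i,k )$ and $( t,v )$ cross and $( t,v ),( v,t )$ are in $D$. Then $i$, $t$, $k$, $v$ are pairwise different, and by Definition~\ref{def:background}(v) either $i < t < k < v$ or $i < v < k < t$. In the first case, (ii') applied with $j = t$ and $\ell = v$ gives Definition~\ref{def:weak_frieze}(ii) verbatim, the hypothesis ``$( j,\ell ),( \ell,j )$ in $D$'' being nothing but ``$( t,v ),( v,t )$ in $D$''. In the second case, (ii') applied with $j = v$ and $\ell = t$ gives
\[
  c_{ ik } = c_{ iv }c_{ tv }^{ -1 }c_{ tk }
           + c_{ it }c_{ vt }^{ -1 }c_{ vk },
\]
and swapping the two summands on the right hand side yields Definition~\ref{def:weak_frieze}(ii).

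I do not expect a genuine obstacle; the only point needing attention is that the condition ``$( t,v )$ and $( v,t )$ lie in $D$'' is symmetric in $t$ and $v$, hence preserved under the relabelling $t \leftrightarrow v$ used to treat the case $i < v < k < t$. (Indeed, since $D$ is a dissection, Definition~\ref{def:background}(vi) already ensures $( t,v )$ lies in $D$ precisely when $( v,t )$ does, so the two conditions in (ii) and (ii') on $D$ are each equivalent to the single membership $( t,v ) \in D$, respectively $( j,\ell ) \in D$.)
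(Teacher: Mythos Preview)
Your proof is correct and is precisely the argument the paper intends: it says only ``Similar to the proof of Lemma~\ref{lem:frieze}'', and you have spelled out that similarity in full, including the one extra observation that the hypothesis $(t,v),(v,t)\in D$ is symmetric in $t$ and $v$ so survives the relabelling in the second case.
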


\begin{proof}
Similar to the proof of Lemma \ref{lem:frieze}.
\end{proof}

\begin{Lemma}
[Inverted triangle relation]
\label{lem:inverted_triangle_relations}
Let $P$ be a polygon, $c : \diag P \xrightarrow{} R$ a map, and $i$, $j$, $k$ pairwise different vertices of $P$.  Suppose that $c_{ jk }$, $c_{ kj }$ are in $R^*$ and that the triangle relation \eqref{equ:def:frieze:a} is satisfied.  If $c_{ ij }$, $c_{ ji }$ are in $R^*$, then the {\em inverted triangle relation}
\[
  c_{ ij }^{ -1 }c_{ ik }c_{ jk }^{ -1 } = c_{ kj }^{ -1 }c_{ ki }c_{ ji }^{ -1 }
\]
is satisfied.
\end{Lemma}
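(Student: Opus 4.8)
The statement is an immediate algebraic consequence of the triangle relation once the relevant elements are invertible, so the plan is simply to rearrange \eqref{equ:def:frieze:a} by multiplying on the appropriate sides. First I would write down the triangle relation
\[
  c_{ ij }c_{ kj }^{ -1 }c_{ ki } = c_{ ik }c_{ jk }^{ -1 }c_{ ji },
\]
which holds by hypothesis. Since $c_{ ij } \in R^*$, I left-multiply both sides by $c_{ ij }^{ -1 }$ to obtain
\[
  c_{ kj }^{ -1 }c_{ ki } = c_{ ij }^{ -1 }c_{ ik }c_{ jk }^{ -1 }c_{ ji }.
\]
Then, since $c_{ ji } \in R^*$, I right-multiply both sides by $c_{ ji }^{ -1 }$ to get
\[
  c_{ kj }^{ -1 }c_{ ki }c_{ ji }^{ -1 } = c_{ ij }^{ -1 }c_{ ik }c_{ jk }^{ -1 },
\]
and swapping the two sides yields exactly the inverted triangle relation claimed.

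The only points requiring (minor) care are bookkeeping ones. Because $R$ is non-commutative, each cancellation must be performed on the correct side, and no factors may be transposed; the two multiplications above respect this. The hypotheses that $c_{ jk }$ and $c_{ kj }$ lie in $R^*$ are used only to guarantee that the symbols $c_{ jk }^{ -1 }$ and $c_{ kj }^{ -1 }$ appearing in \eqref{equ:def:frieze:a} and in the conclusion are meaningful; no cancellation involving these elements is carried out. Thus there is essentially no obstacle: the proof is a two-line manipulation, and the ``hard part'' is merely to verify that the invertibility hypotheses in the lemma are precisely those needed for each step to make sense.
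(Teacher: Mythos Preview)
Your proof is correct and essentially identical to the paper's: the paper also obtains the inverted triangle relation by swapping the two sides of \eqref{equ:def:frieze:a} and multiplying by $c_{ij}^{-1}$ on the left and $c_{ji}^{-1}$ on the right. The only cosmetic difference is that you swap sides at the end rather than at the beginning.
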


\begin{proof}
Swap the left and right hand sides of \eqref{equ:def:frieze:a} and multiply by $c_{ ij }^{ -1 }$ from the left and $c_{ ji }^{ -1 }$ from the right.
\end{proof}

\begin{Proposition}
\label{pro:A}
Suppose that $R$ is a skew field.  Let the following be given.
\medskip
\begin{itemize}
\setlength\itemsep{4pt}

  \item  $P$ is a polygon divided into subpolygons $P^1, \ldots, P^{ \kappa+1 }$ by the dissection
\begin{equation}
\label{equ:pro:A:b}
  \{\, ( t_1,v_1 ),( v_1,t_1 ), \ldots, ( t_{ \kappa },v_{ \kappa } ), ( v_{ \kappa },t_{ \kappa } ) \,\}
\end{equation}
for some $\kappa \geqslant 1$.  See Figure \ref{fig:thm:C:a}.
  
  \item  $c : \diag P \xrightarrow{} R$ is a weak frieze with respect to the dissection \eqref{equ:pro:A:b}.
  
\end{itemize}
\medskip
Set $c^{ \alpha } = c \Big|_{ \diag P^{ \alpha } }$ for $\alpha$ in $\{\, 1, \ldots, \kappa+1 \,\}$.  Then the Dieudonn\'{e} determinants satisfy
\[
  \det M_c = 
  \overline{ ( -1 )^{ \kappa } }
  \cdot 
  \mathlarger{ \prod }_{ \alpha=1 }^{ \kappa }\:
    \overline{ c_{ t_{ \alpha }v_{ \alpha } }^{ -1 }c_{ v_{ \alpha }t_{ \alpha } }^{ -1 } }  
  \cdot
  \mathlarger{ \prod }_{ \alpha=1 }^{ \kappa+1 }
    \det M_{ c^{ \alpha } },
\]
where the matrices $M_c$ and $M_{ c^{ \alpha } }$ are defined by Figure \ref{fig:frieze_matrix}.  Recall that $\overline{ c }$ means the coset in the abelianisation $R^*/[ R^*,R^* ]$ of the element $c$ in $R^*$.  
\end{Proposition}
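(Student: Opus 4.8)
The plan is to induct on $\kappa$, the case $\kappa=1$ carrying essentially all the content.

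For $\kappa=1$, $P$ is cut by the single internal diagonal $(t_1,v_1)$ into $P^1$ and $P^2$. Write $t=t_1$, $v=v_1$, and let $A$ (respectively $B$) be the set of vertices of $P^1$ (respectively $P^2$) other than $t,v$, so the vertex set of $P$ is the disjoint union $A\cupdot\{t,v\}\cupdot B$. Since permuting the rows and columns of a square matrix by one and the same permutation leaves its Dieudonn\'e determinant unchanged (as recorded in the introduction), I may arrange the rows and columns of $M_c$ in the order $A,t,v,B$, of $M_{c^1}$ in the order $A,t,v$, and of $M_{c^2}$ in the order $t,v,B$; with these orderings the principal submatrix of the rearranged $M_c$ on the indices $A,t,v$ is exactly $M_{c^1}$, and its Dieudonn\'e determinant is the quantity $\det M_{c^1}$ of the proposition. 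The diagonals of $P$ crossing $(t,v)$ are precisely those with one endpoint in $A$ and the other in $B$, so the exchange relations supplied by Definition \ref{def:weak_frieze}(ii) are exactly
\[
  c_{ik}=c_{it}c_{vt}^{-1}c_{vk}+c_{iv}c_{tv}^{-1}c_{tk}
  \qquad\text{for all }i,k\text{ with one of }i,k\text{ in }A\text{ and the other in }B.
\]

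\textbf{The reduction for $\kappa=1$.} I would then perform two successive rounds of elementary operations, each preserving the Dieudonn\'e determinant. First, for each $i\in B$, subtract $c_{it}c_{vt}^{-1}$ times row $v$ and $c_{iv}c_{tv}^{-1}$ times row $t$ from row $i$; using the exchange relations above in the columns indexed by $A$, and $c_{tt}=c_{vv}=0$ in columns $t,v$, one checks that row $i$ becomes zero in the columns indexed by $A,t,v$, and its $B$-part becomes $c'_{ij}:=c_{ij}-c_{it}c_{vt}^{-1}c_{vj}-c_{iv}c_{tv}^{-1}c_{tj}$. Second, for each $k\in B$, subtract column $v$ times $c_{tv}^{-1}c_{tk}$ and column $t$ times $c_{vt}^{-1}c_{vk}$ from column $k$; using the exchange relations in the rows indexed by $A$, the identities $c_{tt}=c_{vv}=0$ in rows $t,v$, and the zeros just produced in rows $B$ of columns $t$ and $v$, this makes column $k$ zero in the rows indexed by $A,t,v$ and leaves the block $C':=(c'_{ij})_{i,j\in B}$ untouched. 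The matrix is then block diagonal with diagonal blocks $M_{c^1}$ and $C'$, so $\det M_c=\det M_{c^1}\cdot\det C'$. Applying only the first round of operations to $M_{c^2}$ makes it block upper triangular with diagonal blocks $\bigl(\begin{smallmatrix}0&c_{tv}\\c_{vt}&0\end{smallmatrix}\bigr)$ and $C'$, so $\det M_{c^2}=\overline{-c_{tv}c_{vt}}\cdot\det C'$. Eliminating $\det C'$ between these identities, and using that the target $(R^*/[R^*,R^*])\cupdot\{0\}$ of the Dieudonn\'e determinant is commutative, yields $\det M_c=\overline{-1}\cdot\overline{c_{tv}^{-1}c_{vt}^{-1}}\cdot\det M_{c^1}\cdot\det M_{c^2}$, which is the case $\kappa=1$. (The facts used here --- invariance under left-elementary row and right-elementary column operations, the identity $\det\bigl(\begin{smallmatrix}X&Y\\0&Z\end{smallmatrix}\bigr)=\det X\cdot\det Z$ for square $X,Z$, and $\det\bigl(\begin{smallmatrix}0&a\\b&0\end{smallmatrix}\bigr)=\overline{-ab}$ --- are all standard; see \cite[sec.\ IV.1]{Artin-book}.)

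\textbf{The inductive step.} For $\kappa\geqslant 2$, regard $P^1$ as a ``leaf'' of the configuration in Figure \ref{fig:thm:C:a}: $P$ is cut by the single diagonal $(t_1,v_1)$ into $P^1$ and a polygon $P'$, where $P'$ is cut by $\{(t_2,v_2),(v_2,t_2),\ldots,(t_\kappa,v_\kappa),(v_\kappa,t_\kappa)\}$ into $P^2,\ldots,P^{\kappa+1}$. The restriction of a weak frieze to a subpolygon that is a union of dissection cells is again a weak frieze with respect to the induced dissection, so $c':=c\big|_{\diag P'}$ is a weak frieze with respect to that dissection of $P'$, and $c$ is a weak frieze with respect to $\{(t_1,v_1),(v_1,t_1)\}$. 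The case $\kappa=1$ then gives $\det M_c=\overline{-1}\cdot\overline{c_{t_1v_1}^{-1}c_{v_1t_1}^{-1}}\cdot\det M_{c^1}\cdot\det M_{c'}$, the induction hypothesis applied to $P'$ evaluates $\det M_{c'}$, and substituting and regrouping (again using commutativity of the target) produces the claimed formula. The main obstacle is the bookkeeping in the case $\kappa=1$: the two rounds of elementary operations must be ordered so that the column operations, which use columns $t$ and $v$, act on a matrix in which those columns already vanish in the rows indexed by $B$ --- otherwise the block $C'$ would be disturbed --- and one must track the sign contributed by the antidiagonal $2\times 2$ block faithfully through the passage to the abelianisation.
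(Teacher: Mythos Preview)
Your proof is correct and follows essentially the same approach as the paper: reduce the case $\kappa=1$ by elementary row/column operations that exploit the exchange relation to make $M_c$ block triangular, then compare with the submatrices $M_{c^1}$, $M_{c^2}$; the general case follows by the same easy induction. The only cosmetic difference is that the paper performs a single round of row operations (on the $A$-rows rather than the $B$-rows) to obtain a block \emph{lower} triangular matrix whose diagonal blocks are a modified $(|A|\times|A|)$-block $\widetilde{M}^1$ and $M_{c^2}$ itself, and then observes that the \emph{same} row operations turn $M_{c^1}$ into a block triangular matrix with diagonal blocks $\widetilde{M}^1$ and $C=\bigl(\begin{smallmatrix}0&c_{tv}\\c_{vt}&0\end{smallmatrix}\bigr)$ --- this avoids your second round of column operations and the attendant ordering caveat, but the content is the same.
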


\begin{proof}
We will assume $\kappa = 1$ since the general case follows from this by an easy induction.  Then there are two subpolygons, $P^1$ and $P^2$, and the dissection \eqref{equ:pro:A:b} has the form $\{\, ( t,v ), ( v,t ) \,\}$.  Let $m$ and $n$ be such that $P^1$ is an $m$-gon and $P^2$ is an $n$-gon; then $P$ is an $( m+n-2 )$-gon whose vertices will be numbered as shown in Figure \ref{fig:pro:A:a} such that $t = m-1$ and $v = m$.
\begin{figure}
\begingroup
\[
  \begin{tikzpicture}[scale=1.3333333]
      \node[name=s, shape=regular polygon, regular polygon sides=20, rotate=9, minimum size=8cm, draw] {}; 
      \draw [thick] (5*360/20:3cm) to (15*360/20:3cm);
      \draw (6*360/20:3.3cm) node {$1$};
      \draw (7*360/20:3.3cm) node {$2$};
      \draw (7*360/20+11:3.3cm) node {$\cdot$};
      \draw (7*360/20+14:3.3cm) node {$\cdot$};
      \draw (7*360/20+17:3.3cm) node {$\cdot$};
      \draw (13*360/20-22:3.3cm) node {$\cdot$};
      \draw (13*360/20-19:3.3cm) node {$\cdot$};
      \draw (13*360/20-16:3.3cm) node {$\cdot$};
      \draw (13*360/20-4:3.5cm) node {$m-3$};
      \draw (14*360/20-5:3.4cm) node {$m-2$};
      \draw (15*360/20:3.3cm) node {$t = m-1$};
      \draw (5*360/20:3.3cm) node {$v = m$};
      \draw (4*360/20-4:3.4cm) node {$m+1$};
      \draw (3*360/20-4:3.5cm) node {$m+2$};
      \draw (3*360/20-16:3.3cm) node {$\cdot$};
      \draw (3*360/20-19:3.3cm) node {$\cdot$};
      \draw (3*360/20-22:3.3cm) node {$\cdot$};
      \draw (17*360/20+22:3.3cm) node {$\cdot$};
      \draw (17*360/20+19:3.3cm) node {$\cdot$};
      \draw (17*360/20+16:3.3cm) node {$\cdot$};
      \draw (17*360/20+9:3.65cm) node {$m+n-3$};
      \draw (16*360/20+11:3.45cm) node {$m+n-2$};
      \draw (-1.2,0) node {$P^1$};
      \draw (1.2,0) node {$P^2$};
  \end{tikzpicture} 
\]
\endgroup
\caption{The proof of Proposition \ref{pro:A} considers a polygon $P$ split into subpolygons $P^1$ and $P^2$ by the dissection $\{\, ( t,v ), ( v,t ) \,\}$.  The vertices will be numbered as shown such that $t = m-1$ and $v = m$.}
\label{fig:pro:A:a}
\end{figure}
This gives the matrix $M_c$ shown in Figure \ref{fig:pro:A:b}.
\begin{figure}
\begingroup
\renewcommand{\arraystretch}{1.5}
\[
  M_c
  =
  \left(
    \begin{array}{ccc|cc|ccc}
      & & & * & * & & & \\
      & M^1 & & \vdots & \vdots & & * & \\
      & & & * & * & & & \\
\hline
      * & \cdots & * & 0 & c_{ m-1,m } & * & \cdots & * \\
      * & \cdots & * & c_{ m,m-1 } & 0 & * & \cdots & * \\
\hline
      & & & * & * & & & \\
      & * & & \vdots & \vdots & & M^2 & \\
      & & & * & * & & & \\
    \end{array}
  \right)
\]
\endgroup
\caption{The proof of Proposition \ref{pro:A} considers the matrix $M_c$ corresponding to the numbering in Figure \ref{fig:pro:A:a}.}
\label{fig:pro:A:b}
\end{figure}

Denote the central block by
$
C
=
\left(
  \begin{array}{cc}
    0 & c_{ m-1,m } \\
    c_{ m,m-1 } & 0
  \end{array}
\right).
$
Then the block with $M^1$ and $C$ on the diagonal is $M_{ c^1 }$, and the block with $C$ and $M^2$ on the diagonal is $M_{ c^2 }$.

Define a new matrix $\widetilde{M}_c$ by row operations on $M_c$, subtracting certain left multiples of rows number $m-1$ and $m$ from the first $m-2$ rows.  Specifically, define coefficients
\[
  x_i = c_{ im }c_{ m-1,m }^{ -1 }
  \;\;,\;\;
  y_i = c_{ i,m-1 }c_{ m,m-1 }^{ -1 }
\]
for $1 \leqslant i \leqslant m-2$ and let $\widetilde{M}_c$ have the entries
\[
  \widetilde{ c }_{ ik }
  =
  \left\{
    \begin{array}{ll}
      c_{ ik } - x_ic_{ m-1,k } - y_ic_{ m,k } & \mbox{ for } 1 \leqslant i \leqslant m-2, \\[2mm]
      c_{ ik } & \mbox{ for } m-1 \leqslant i \leqslant m+n-2. \\
    \end{array}
  \right.
\]
For $1 \leqslant i \leqslant m-2$ we have
\[
  \widetilde{ c }_{ ik }
  =
  c_{ ik } - x_ic_{ m-1,k } - y_ic_{ m,k }
  =
  c_{ ik } - c_{ im }c_{ m-1,m }^{ -1 }c_{ m-1,k } - c_{ i,m-1 }c_{ m,m-1 }^{ -1 }c_{ m,k }
  =
  (*).
\]
If $m-1 \leqslant k \leqslant m$ then $(*) = 0$ by direct computation in view of the convention $c_{ xx } = 0$.  If $m+1 \leqslant k \leqslant m+n-2$ then $(*) = 0$ by an exchange relation, which holds because $c$ is a weak frieze with respect to $\{\, ( t,v ),( v,t ) \,\} = \{\, ( m-1,m ),( m,m-1 ) \,\}$ and because $1 \leqslant i \leqslant m-2$ and $m+1 \leqslant k \leqslant m+n-2$ imply that $( i,k )$ and $( m-1,m )$ cross.  Hence $\widetilde{ M }_c$ is as shown in Figure \ref{fig:pro:A:c}.
\begin{figure}
\begingroup
\renewcommand{\arraystretch}{1.5}
\[
  \widetilde{ M }_c
  =
  \left(
    \begin{array}{ccc|cc|ccc}
      & & & 0 & 0 & & & \\
      & \widetilde{ M }^1 & & \vdots & \vdots & & 0 & \\
      & & & 0 & 0 & & & \\
\hline      
      * & \cdots & * & 0 & c_{ m-1,m } & * & \cdots & * \\
      * & \cdots & * & c_{ m,m-1 } & 0 & * & \cdots & * \\
\hline
      & & & * & * & & & \\
      & * & & \vdots & \vdots & & M^2 & \\
      & & & * & * & & & \\
    \end{array}
  \right)
\]
\endgroup
\caption{The proof of Proposition \ref{pro:A} considers the matrix $\widetilde{ M }_c$ obtained by row operations on the matrix $M_c$ in Figure \ref{fig:pro:A:b}.}
\label{fig:pro:A:c}
\end{figure}

Let $\widetilde{ M }_{ c^1 }$ denote the block with $\widetilde{ M }^1$ and $C$ on the diagonal.  We can compute as follows.
\begin{align}
\nonumber
  \det( C )^{ -1 }\det( M_{ c^1 } )\det( M_{ c^2 } )
  & \stackrel{ \rm (a) }{ = } \det( C )^{ -1 }\det( \widetilde{ M }_{ c^1 } )\det( M_{ c^2 } ) \\[2mm]
\nonumber
  & \stackrel{ \rm (b) }{ = } \det( C )^{ -1 }\det( \widetilde{ M }^1  )\det( C )\det( M_{ c^2 } ) \\[2mm]
\nonumber
  & = \det( \widetilde{ M }^1  )\det( M_{ c^2 } ) \\[2mm]
\nonumber
  & \stackrel{ \rm (c) }{ = } \det \widetilde{ M }_c \\[2mm]
\label{equ:pro:A:a}
  & \stackrel{ \rm (d) }{ = } \det M_c
\end{align}
Here (a) and (d) hold by \cite[p.\ 153, part a)]{Artin-book} since $\widetilde{ M }_{ c^1 }$ and $\widetilde{ M }_c$ differ from $M_{ c^1 }$ and $M_c$, respectively, by row operations, and (b) and (c) hold by \cite[thm.\ 4.4]{Artin-book} since $\widetilde{ M }_{ c^1 }$ and $\widetilde{ M }_c$ are lower diagonal block matrices.

Finally, $\det C = \overline{ -1 } \cdot \overline{ c_{ m-1,m }c_{ m,m-1 } } = \overline{ -1 } \cdot \overline{ c_{ tv }c_{ vt } }$ by \cite[p.\ 157, item 1)]{Artin-book}.  Combining with \eqref{equ:pro:A:a} gives
\[
  \overline{ -1 } \cdot \overline{ c_{ tv }^{ -1 }c_{ vt }^{ -1 } }
    \cdot \det( M_{ c^1 } )\det( M_{ c^2 } )
  = \det M_c
\]
as claimed.
\end{proof}

\begin{Setup}
\label{set:P1P2}
Lemmas \ref{lem:B} through \ref{lem:8} concern the following situation shown in Figure \ref{fig:lem:B:a}.  
\begin{itemize}

  \item  $P$ is a polygon divided into subpolygons $P^1$ and $P^2$ by the dissection 
\[
  \{\, ( t,v ), ( v,t ) \,\}.
\]
  
  \item  $P^{ \alpha }$ has a dissection $D^{ \alpha }$ for each $\alpha$ in $\{\, 1,2 \,\}$, and $P$ has the dissection
\[
  D = \{\, ( t,v ), ( v,t ) \,\} \cupdot D^1 \cupdot D^2.
\]
\begin{figure}
\begingroup
\[
  \begin{tikzpicture}[scale=0.75]
      \node[name=s, shape=regular polygon, regular polygon sides=20, rotate=9, minimum size=6cm, draw] {}; 
      \draw [thick] (7*360/20:4cm) to (16*360/20:4cm);
      \draw (7*360/20:4.3cm) node { $t$ };
      \draw (16*360/20:4.3cm) node { $v$ };
      \draw (11.5*360/20:1.2cm) node { $P^1$ };
      \draw (1.5*360/20:-0.05cm) node { $P^2$ };      
      \draw [thick,red] (10*360/20:4cm) to (12*360/20:4cm);
      \draw [thick,red] (9*360/20:4cm) to (14*360/20:4cm);
      \draw [thick,blue] (2*360/20:4cm) to (-1*360/20:4cm);
      \draw [thick,blue] (4*360/20:4cm) to (-2*360/20:4cm);
      \draw [red] (-2.3cm,-1.0cm) node { $\cdot$ };
      \draw [red] (-2.1cm,-0.9cm) node { $\cdot$ };
      \draw [red] (-1.9cm,-0.8cm) node { $\cdot$ };      
      \draw [blue] (1.4cm,0.8cm) node { $\cdot$ };
      \draw [blue] (1.6cm,0.9cm) node { $\cdot$ };
      \draw [blue] (1.8cm,1.0cm) node { $\cdot$ };      
  \end{tikzpicture} 
\]
\endgroup
\caption{Setup \ref{set:P1P2} considers a polygon $P$ divided into subpolygons $P^1$ and $P^2$ by the dissection $\{\, ( t,v ), ( v,t ) \,\}$.  The subpolygons in turn have dissections $D^1$ (red) and $D^2$ (blue), and $P$ has the dissection $D = \{\, ( t,v ), ( v,t ) \,\} \cupdot D^1 \cupdot D^2$.}
\label{fig:lem:B:a}
\end{figure}
\end{itemize}
We will use the following notation for each $\alpha$ in $\{\, 1,2 \,\}$.
\begin{itemize}
\setlength\itemsep{4pt}

  \item  $V^{ \alpha }$ is the set of vertices of $P^{ \alpha }$.
  
  \item  $U^{ \alpha } = V^{ \alpha } \setminus \{\, t,v \,\}$ is the set of vertices of $P^{ \alpha }$ not belonging to the other subpolygon.
  
\end{itemize}
\end{Setup}

\begin{Lemma}
\label{lem:B}
Consider Setup \ref{set:P1P2} and let the following additional data be given:
\begin{itemize}
\setlength\itemsep{4pt}
  
  \item  $c^{ \alpha } : \diag P^{ \alpha } \xrightarrow{} R$ is a weak frieze with respect to $D^{ \alpha }$ for each $\alpha$ in $\{\, 1,2 \,\}$.

\end{itemize}
Assume that:
\begin{itemize}

  \item  We have $c^1_{ tv } = c^2_{ tv }$ and $c^1_{ vt } = c^2_{ vt }$ and these elements are in $R^*$.  

\end{itemize}
Then there exists a unique map $c : \diag P \xrightarrow{} R$ satisfying the following conditions.
\begin{enumerate}
\setlength\itemsep{4pt}

  \item  $c \Big|_{ \diag P^{ \alpha } } = c^{ \alpha }$ for each $\alpha$ in $\{\, 1,2 \,\}$. 

  \item  $c$ is a weak frieze with respect to $D$.

\end{enumerate}
Each $c_{ ik }$ is given by the relevant entry of the following table where $U^1$, $U^2$ are as defined in Setup \ref{set:P1P2}.
\medskip
\[
\mbox{
\begin{tabular}{c|cccccc}
  \diaghead{\theadfont xxxxxxxxx}{$i$}{$k$} & $U^1$ & $\{ t,v \}$ & $U^2$ \\[2mm] \cline{1-4}
  \vphantom{$U^{1^{1^1}}$}$U^1$ & $c^1_{ ik }$ & $c^1_{ ik }$ & $c^1_{ it }( c^1_{ vt } )^{ -1 }c^2_{ vk } + c^1_{ iv }( c^1_{ tv } )^{ -1 }c^2_{ tk }$ \\[2mm]
  $\{ t,v \}$ & $c^1_{ ik }$ & $c^1_{ ik } = c^2_{ ik }$ & $c^2_{ ik }$ \\[2mm]
  $U^2$ & $c^2_{ it }( c^2_{ vt } )^{ -1 }c^1_{ vk } + c^2_{ iv }( c^2_{ tv } )^{ -1 }c^1_{ tk }$ & $c^2_{ ik }$ & $c^2_{ ik }$ \\
\end{tabular}
}
\]
\end{Lemma}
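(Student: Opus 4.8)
The plan is to prove uniqueness first --- which simultaneously forces the displayed table --- and then to verify existence by checking that the table defines a weak frieze.

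\emph{Uniqueness.} Suppose $c : \diag P \to R$ satisfies (i) and (ii). If $i$ and $k$ both lie in $V^{ \alpha }$ then $c_{ ik } = c^{ \alpha }_{ ik }$ by (i). If $i \in U^1$ and $k \in U^2$, then $( i,k )$ crosses $( t,v )$ while $( t,v ),( v,t ) \in D$, so Definition \ref{def:weak_frieze}(ii) gives $c_{ ik } = c_{ it }c_{ vt }^{ -1 }c_{ vk } + c_{ iv }c_{ tv }^{ -1 }c_{ tk }$; here $c_{ it } = c^1_{ it }$, $c_{ vt } = c^1_{ vt }$, $c_{ iv } = c^1_{ iv }$, $c_{ tv } = c^1_{ tv }$ have both arguments in $V^1$, while $c_{ vk } = c^2_{ vk }$ and $c_{ tk } = c^2_{ tk }$ have both arguments in $V^2$, and since $c^1_{ vt }, c^1_{ tv } \in R^*$ by hypothesis this is exactly the table entry. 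The case $i \in U^2$, $k \in U^1$ is symmetric. Hence $c$ must equal the displayed table.

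\emph{Existence.} Now I would \emph{define} $c$ by the table and check (i), (ii), and that $c_{ rs } \in R^*$ for $( r,s ) \in D$. The first and last are immediate: the table restricts to $c^{ \alpha }$ on $\diag P^{ \alpha }$, consistently on the overlap (where $c^1_{ tv } = c^2_{ tv }$, $c^1_{ vt } = c^2_{ vt }$), so $c_{ rs } = c^{ \alpha }_{ rs } \in R^*$ for $( r,s ) \in D^{ \alpha }$ by Definition \ref{def:weak_frieze}(i) for $c^{ \alpha }$, and $c_{ tv }, c_{ vt } \in R^*$ by hypothesis. For the exchange relations I would first record the identity
\[
  c_{ wk } = c^1_{ wt }( c^1_{ vt } )^{ -1 }c^2_{ vk } + c^1_{ wv }( c^1_{ tv } )^{ -1 }c^2_{ tk }
  \qquad ( w \in V^1,\ k \in U^2 ),
\]
which is the table entry when $w \in U^1$ and, when $w = t$ or $w = v$, collapses to $c^2_{ tk }$ or $c^2_{ vk }$ using the convention $c_{ xx } = 0$ of Definition \ref{def:background}, together with its mirror for $w \in V^2$, $k \in U^1$. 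Now let $( r,s ) \in D$ with $( r,s ),( s,r ) \in D$ and $( i,k )$ crossing $( r,s )$. If $( r,s ) \in \{\, ( t,v ),( v,t ) \,\}$, then $\{\, i,k \,\}$ meets both $U^1$ and $U^2$ and the required relation is literally the defining formula of the table (the two orientations of the dissecting diagonal give the same relation, since addition in $R$ is commutative). If $( r,s ) \in D^1$, then $( i,k )$ cannot have both endpoints in $V^2$, because a diagonal of $P$ joining two vertices of $V^2$ lies in the closed region $P^2$, whose only intersection with the chord $( r,s )$ of $P^1$ is along the edge $( t,v )$, so no proper crossing can occur; arguing likewise one sees $i,k \notin \{\, t,v \,\}$ in the mixed situations, so exactly one of the following holds: (a) $i,k \in V^1$; (b) $i \in U^1$, $k \in U^2$; (c) $i \in U^2$, $k \in U^1$. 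In (a) all six entries of the relation are $c^1$-values and the relation is the exchange relation of the weak frieze $c^1$ for $( i,k )$ against $( r,s )$. In (b) and (c) one substitutes the displayed identity (and its mirror) into both sides; collecting the coefficients of $c^2_{ vk }$ and $c^2_{ tk }$ in case (b), respectively of $c^2_{ it }$ and $c^2_{ iv }$ in case (c), the claim reduces to the exchange relations of $c^1$ for the pairs $( i,t )$ and $( i,v )$, respectively $( v,k )$ and $( t,k )$, against $( r,s )$. These hold by Definition \ref{def:weak_frieze}(ii) for $c^1$ once the auxiliary diagonals are known to cross $( r,s )$, and this follows from the geometry: since $( r,s )$ does not cross the chord $( t,v )$, the vertices $t$ and $v$ --- hence all of $U^2$, in particular $k$ in case (b) and $i$ in case (c) --- lie on one fixed side of $( r,s )$, while the remaining endpoint lies on the other, so a short inspection of the cyclic order yields the crossings (when an endpoint of $( r,s )$ equals $t$ or $v$, one of the two reductions becomes vacuous and the argument is unchanged). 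The case $( r,s ) \in D^2$ is symmetric.

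\emph{Main obstacle.} The real work is organisational: in the mixed cases (b) and (c) one must track which $c$-values are $c^1$-values, which are $c^2$-values, and which are the ``glued'' values governed by the displayed identity, all while respecting the non-commutativity of $R$ (keeping left and right multiplications straight); and one must confirm in every configuration --- including the degenerate ones where an endpoint of $( r,s )$ is $t$ or $v$ --- that the auxiliary diagonals genuinely cross $( r,s )$. The displayed identity and its mirror are the device that makes this bookkeeping uniform, absorbing the degenerate subcases automatically.
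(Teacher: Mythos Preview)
Your proof is correct and follows essentially the same route as the paper's: define $c$ by the table, observe that uniqueness and the exchange relation across $(t,v)$ are immediate, and for $(r,s)\in D^1$ reduce the mixed cases to exchange relations of $c^1$ for $(i,t),(i,v)$ (respectively $(t,k),(v,k)$) against $(r,s)$, with the degenerate subcases $r\in\{t,v\}$ or $s\in\{t,v\}$ absorbed by the convention $c_{xx}=0$. Your ``displayed identity'' for all $w\in V^1$ is exactly the device the paper uses (its equations for $c_{jk}$, $c_{\ell k}$, $c_{it}$, $c_{iv}$), and your right/left factoring of the $c^2$-terms is the same non-commutative bookkeeping the paper carries out explicitly.
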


\begin{proof}
Let $c : \diag P \xrightarrow{} R$ be the map defined by the table.

It is clearly the unique weak frieze on $P$ with respect to the dissection $\{\, ( t,v ), ( v,t ) \,\}$ which satisfies condition (i) of the Lemma.  We will complete the proof by showing that $c$ satisfies Definition \ref{def:weak_frieze} with respect to the dissection $D$.

Definition \ref{def:weak_frieze}(i) follows from the definition of $c$ and the conditions on the $c^{ \alpha }$.

To show that Definition \ref{def:weak_frieze}(ii) is satisfied, it is enough to show that Lemma \ref{lem:weak_frieze}(ii') is satisfied, so let $i < j < k < \ell$ be pairwise different vertices of $P$ with $( j,\ell )$, $( \ell,j )$ in $D$.  We must establish the exchange relation
\begin{equation}
\label{equ:lem:B:a}
  c_{ ik } = c_{ ij }c_{ \ell j }^{ -1 }c_{ \ell k }
           + c_{ i\ell }c_{ j\ell }^{ -1 }c_{ jk }.
\end{equation}
If $\{\, ( j,\ell ),( \ell,j ) \,\}$ is $\{\, ( t,v ),( v,t ) \,\}$, then \eqref{equ:lem:B:a} is true by the definition of $c$.  If $( j,\ell )$, $( \ell,j )$ are in $D^1$ or in $D^2$, we can assume by symmetry that they are in $D^1$ and split into the following cases.  Note that $j$, $\ell$ are in $V^1$ so $i$, $k$ cannot both be in $V^2$ because $i < j < k < \ell$.

Case 1: $i$, $k$ are in $V^1$.  Then $i < j < k < \ell$ are pairwise different vertices of $P^1$.  By definition of $c$, Equation \eqref{equ:lem:B:a} amounts to the same equation with $c^1$ instead of $c$, and this equation is true because $c^1$ is a weak frieze with respect to $D^1$.  

Case 2: 
$i$ is in $U^1$ and $k$ is in $U^2$.  

The definition of $c$ implies
\begin{equation}
\label{equ:lem:B:b}
  c_{ ik } = c_{ it }c_{ vt }^{ -1 }c_{ vk }
           + c_{ iv }c_{ tv }^{ -1 }c_{ tk }.
\end{equation}
If the diagonals $( j,k )$ and $( t,v )$ cross, then the definition of $c$ also implies
\begin{equation}
\label{equ:lem:B:c}
  c_{ jk } = c_{ jt }c_{ vt }^{ -1 }c_{ vk }
           + c_{ jv }c_{ tv }^{ -1 }c_{ tk }.
\end{equation}
The last equation is also true if the diagonals $( j,k )$ and $( t,v )$ do not cross.  The reason is that in this case, $j$ must be $t$ or $v$ since $j$ is in $V^1$ while $k$ is in $U^2$, and we have the convention $c_{ xx } = 0$.  Similarly we get
\begin{equation}
\label{equ:lem:B:d}
  c_{ \ell k } = c_{ \ell t }c_{ vt }^{ -1 }c_{ vk }
           + c_{ \ell v }c_{ tv }^{ -1 }c_{ tk }.
\end{equation}

The restriction of $c$ to $P^1$ is a weak frieze with respect to $\{\, ( j,\ell ),( \ell,j ) \,\}$ by assumption, so if the diagonals $( i,t )$ and $( j,\ell )$ cross, then
\begin{equation}
\label{equ:lem:B:e}
  c_{ it } = c_{ ij }c_{ \ell j }^{ -1 }c_{ \ell t }
           + c_{ i\ell }c_{ j\ell }^{ -1 }c_{ jt }.
\end{equation}
The last equation is also true if the diagonals $( i,t )$ and $( j,\ell )$ do not cross.  The reason is that in this case, $j$ or $\ell$ must be $t$ since $j$ and $\ell$ are in $V^1$; see Figure \ref{fig:lem:B:b}.
\begin{figure}
\begingroup
\[
  \begin{tikzpicture}[scale=1]
      \node[name=s, shape=regular polygon, regular polygon sides=20, rotate=9, minimum size=5cm, draw] {}; 
      \draw [thick] (7*360/20:2.5cm) to (16*360/20:2.5cm);
      \draw [dotted,thick] (7*360/20:2.5cm) to (12*360/20:2.5cm);
      \draw [dotted,thick] (9*360/20:2.5cm) to (14*360/20:2.5cm);
      \draw (7*360/20:2.75cm) node { $t$ };
      \draw (16*360/20:2.75cm) node { $v$ };
      \draw (12*360/20:2.7cm) node { $i$ };
      \draw (14*360/20:2.80cm) node { $j$ };
      \draw (1*360/20:2.75cm) node { $k$ };
      \draw (9*360/20:2.7cm) node { $\ell$ };
      \draw (11.5*360/20:0.8cm) node { $P^1$ };
      \draw (1.5*360/20:0.05cm) node { $P^2$ };      
  \end{tikzpicture} 
\]
\endgroup
\caption{In the proof of Lemma \ref{lem:B}, Case 2, if the diagonals $( i,t )$ and $( j,\ell )$ do not cross then we must have $\ell = t$ (if $t$ and $v$ are placed as here) or $j = t$ (if $t$ and $v$ are swapped).}
\label{fig:lem:B:b}
\end{figure}
Similarly we get
\begin{equation}
\label{equ:lem:B:f}
  c_{ iv } = c_{ ij }c_{ \ell j }^{ -1 }c_{ \ell v }
           + c_{ i\ell }c_{ j\ell }^{ -1 }c_{ jv }.
\end{equation}

We can now prove \eqref{equ:lem:B:a} by the following computation:
\begin{align*}
  c_{ ij }c_{ \ell j }^{ -1 }c_{ \ell k }
  + c_{ i\ell }c_{ j\ell }^{ -1 }c_{ jk }
  & \stackrel{ \rm (a) }{ = }
  c_{ ij }c_{ \ell j }^{ -1 }
  ( c_{ \ell t }c_{ vt }^{ -1 }c_{ vk }
    + c_{ \ell v }c_{ tv }^{ -1 }c_{ tk } )
  + c_{ i\ell }c_{ j\ell }^{ -1 }
  ( c_{ jt }c_{ vt }^{ -1 }c_{ vk }
    + c_{ jv }c_{ tv }^{ -1 }c_{ tk } ) \\[2mm]
  & =
  ( c_{ ij }c_{ \ell j }^{ -1 }c_{ \ell t }
    + c_{ i\ell }c_{ j\ell }^{ -1 }c_{ jt } )c_{ vt }^{ -1 }c_{ vk }
  +
  ( c_{ ij }c_{ \ell j }^{ -1 }c_{ \ell v }
    + c_{ i\ell }c_{ j\ell }^{ -1 }c_{ jv } )c_{ tv }^{ -1 }c_{ tk } \\[2mm]
  & \stackrel{ \rm (b) }{ = }
  c_{ it }c_{ vt }^{ -1 }c_{ vk }
  +
  c_{ iv }c_{ tv }^{ -1 }c_{ tk } \\[2mm]
  & \stackrel{ \rm (c) }{ = }
  c_{ ik }.
\end{align*}
Here (a) is by \eqref{equ:lem:B:c} and \eqref{equ:lem:B:d} while (b) is by \eqref{equ:lem:B:e} and \eqref{equ:lem:B:f}.  Finally, (c) is by \eqref{equ:lem:B:b}.

Case 3: 
$i$ is in $U^2$ and $k$ is in $U^1$.  This is proved similarly to Case 2.
\end{proof}

%

\begin{Lemma}
\label{lem:computation}
Consider Setup \ref{set:P1P2} with $D^1 = \emptyset$ and let $c : \diag P \xrightarrow{} R$ be a map such  that $c_{ rs }$ is in $R^*$ if $( r,s )$ is in $D$.

For $i$ in $U^1$ and $k$ in $U^2$ we have
\begin{equation}
\label{equ:lem:computation:a}
  \sum_{ p \in \cP( D,i,k ) } c_p
  =
  c_{ it }c_{ vt }^{ -1 } \sum_{ q \in \cP( D^2,v,k ) } c_q
  +
  c_{ iv }c_{ tv }^{ -1 } \sum_{ q \in \cP( D^2,t,k ) } c_q
\end{equation}
For $i$ in $U^2$ and $k$ in $U^1$ we have
\begin{equation}
\label{equ:lem:computation:u}
  \sum_{ p \in \cP( D,i,k ) } c_p
  =
  \Bigg( \sum_{ q \in \cP( D^2,i,t ) } c_q \Bigg)
  c_{ vt }^{ -1 }c_{ vk }
  +
  \Bigg( \sum_{ q \in \cP( D^2,i,v ) } c_q \Bigg)
  c_{ tv }^{ -1 }c_{ tk }.
\end{equation}

\end{Lemma}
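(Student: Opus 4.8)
The plan is to prove both identities by weight-preserving bijections between sets of $T$-paths; I describe the argument for \eqref{equ:lem:computation:a}, and \eqref{equ:lem:computation:u} is the mirror image. First I record the geometry. Since $i \in U^1$ and $k \in U^2$, the diagonal $(i,k)$ meets the edge $(t,v)$ of $P^1$ and $P^2$ at an interior point, so $(i,k)$ crosses $(t,v) \in D$. Hence no $T$-path from $i$ to $k$ with respect to $D$ is the single step $(i,k)$, and every such $T$-path has a first even step $(p_2,p_3) \in D$. As $D^1 = \emptyset$ we have $D = \{\,(t,v),(v,t)\,\} \cupdot D^2$, so $(p_2,p_3)$ is either $(t,v)$, or $(v,t)$, or a diagonal of $D^2$; in the last case $p_2 \in V^2$, and $p_2 \notin U^2$ because the odd step $(i,p_2)$ may not cross $(t,v) \in D$. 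In all cases $p_2 \in \{\,t,v\,\}$, so $\cP(D,i,k) = \cP(D,i,k)^{(i,t)} \cupdot \cP(D,i,k)^{(i,v)}$ in the notation of Definition \ref{def:T-path_sets}.

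Next I set up a bijection $\psi \colon \cP(D,i,k)^{(i,t)} \xrightarrow{\ \sim\ } \cP(D^2,v,k)$; by symmetry there is an analogous bijection $\cP(D,i,k)^{(i,v)} \xrightarrow{\ \sim\ } \cP(D^2,t,k)$. Given $p = (i,t,p_3,\dots,p_\pi) \in \cP(D,i,k)^{(i,t)}$, set $\psi(p) = (v,p_4,\dots,p_\pi)$ if $p_3 = v$ (the first even step of $p$ is the gate), and $\psi(p) = (v,t,p_3,p_4,\dots,p_\pi)$ if $p_3 \neq v$ (so $p_3 \in U^2$ and the first even step lies in $D^2$). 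One checks that $\psi(p) \in \cP(D^2,v,k)$: its even steps are even steps of $p$ lying in $D^2$ (in the first case the gate step of $p$ is the one discarded, in the second the first even step of $p$ is already the $D^2$-diagonal $(t,p_3)$); its odd steps are either odd steps of $p$, hence cross no diagonal of $D \supseteq D^2$, or the edge $(v,t)$ of $P^2$, which crosses no diagonal of $D^2$; and its even steps cross $(v,k)$ with crossing points progressing monotonically, because inside $\overline{P^2}$ a diagonal of $P^2$ meeting the portion of $(i,k)$ lying in $\overline{P^2}$ meets $(v,k)$ as well, in the same cyclic order. The inverse map sends $q = (v,q_2,q_3,\dots) \in \cP(D^2,v,k)$ to $(i,q_2,q_3,\dots)$ when $q_2 = t$ and to $(i,t,v,q_2,q_3,\dots)$ when $q_2 \neq t$.

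Then I verify the weights. For $p \in \cP(D,i,k)^{(i,t)}$, expanding $c_p$ as in Definition \ref{def:T-path_formula}(i) gives $c_p = c_{it}c_{vt}^{-1}c_{\psi(p)}$: this is immediate when $p_3 = v$, and is obtained by inserting $c_{vt}^{-1}c_{vt}$ after the leading factor $c_{it}$ when $p_3 \neq v$. Likewise $c_p = c_{iv}c_{tv}^{-1}c_{\psi(p)}$ for $p \in \cP(D,i,k)^{(i,v)}$. Summing over $\cP(D,i,k)$, splitting according to whether $p_2$ equals $t$ or $v$, and reindexing each sum by $q = \psi(p)$ yields
\[
  \sum_{p \in \cP(D,i,k)} c_p
  = c_{it}c_{vt}^{-1}\sum_{q \in \cP(D^2,v,k)}c_q
  \;+\; c_{iv}c_{tv}^{-1}\sum_{q \in \cP(D^2,t,k)}c_q,
\]
which is \eqref{equ:lem:computation:a}. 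For \eqref{equ:lem:computation:u} one runs the same argument from the other end: when $i \in U^2$ and $k \in U^1$, the last even step of a $T$-path from $i$ to $k$ is the gate or lies in $D^2$, so $p_{\pi-1} \in \{\,t,v\,\}$, and truncating or extending at the final vertices gives bijections onto $\cP(D^2,i,t)$ and $\cP(D^2,i,v)$ with $c_p = c_q c_{vt}^{-1}c_{vk}$ and $c_p = c_q c_{tv}^{-1}c_{tk}$ respectively; alternatively, apply \eqref{equ:lem:computation:a} after reversing the cyclic order of $P$.

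The step I expect to be the main obstacle is checking that $\psi$ is a well-defined bijection, and in particular that $\psi(p)$ satisfies Definition \ref{def:T-path}(iii) and (iv): one must compare precisely how diagonals of $P^2$ cross $(i,k)$ versus $(v,k)$ inside $\overline{P^2}$, confirm that the even-step diagonals of $p$ and of $\psi(p)$ correspond in the correct monotone order, and rule out the potential degeneracy of an even step of $\psi(p)$ passing through the vertex $v$ (excluded using Definition \ref{def:T-path}(ii)). The remaining bookkeeping — the case split on $p_2$, the weight computation, and the summation — is routine.
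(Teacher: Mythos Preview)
Your argument is correct and is essentially the paper's proof: your bijection $\psi$ is exactly the union of the two bijections $R$ and $S$ that the paper imports from \cite[lemmas 3.4 and 3.8]{Canakci-Jorgensen}, the splitting $\cP(D,i,k)=\cP(D,i,k)^{(i,t)}\cupdot\cP(D,i,k)^{(i,v)}$ is their lemma~3.7, and the weight check $c_p=c_{it}c_{vt}^{-1}c_{\psi(p)}$ is identical. The one genuine difference is in obtaining \eqref{equ:lem:computation:u}: the paper reduces it formally to \eqref{equ:lem:computation:a} by passing to the opposite ring via $\overline{c}:\diag P\to R^{\opp}$, $\overline{c}_{ik}=c_{ki}$, together with the $T$-path reversal $p\mapsto\overline{p}$, whereas your primary suggestion is to rerun the bijection from the tail end --- both are fine, but note that your alternative ``reverse the cyclic order of $P$'' does not by itself work, since that leaves crossings and $T$-paths unchanged and does nothing to reverse the order of multiplication in $R$.
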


\begin{proof}
First, suppose that $i$ is in $U^1$ and $k$ is in $U^2$.  We will prove Equation \eqref{equ:lem:computation:a}.  By \cite[lemmas 3.4 and 3.8]{Canakci-Jorgensen} there are bijections
\begin{align*}
  \cP( D,i,k )^{ ( i,t,\neq v ) } 
    & \xrightarrow{R} \cP( D^2,v,k )^{ ( v,t ) }, \\[2mm]
  ( i,t,p_3,\ldots,p_{ \pi-1 },k )
  & \mapsto ( v,t,p_3,\ldots,p_{ \pi-1 },k )
\end{align*}
and
\begin{align*}
  \cP( D,i,k )^{ ( i,t,v ) } 
    & \xrightarrow{S} \cP( D^2,v,k )^{ ( v,\neq t ) }, \\[2mm]
  ( i,t,v,p_4,\ldots,p_{ \pi-1 },k )
  & \mapsto ( v,p_4,\ldots,p_{ \pi-1 },k ).
\end{align*}
This provides the equality
\[
  \sum_{ p \in \cP( D,i,k )^{ ( i,t,\neq v ) } } c_{ R( p ) }
    + \sum_{ p \in \cP( D,i,k )^{ ( i,t,v ) } } c_{ S( p ) }
  = \sum_{ q \in \cP( D^2,v,k )^{ ( v,t ) } } c_q
    + \sum_{ q \in \cP( D^2,v,k )^{ ( v,\neq t ) } } c_q,
\]
which clearly implies
\begin{equation}
\label{equ:lem:computation:b}	
  \sum_{ p \in \cP( D,i,k )^{ ( i,t,\neq v ) } } c_{ R( p ) }
    + \sum_{ p \in \cP( D,i,k )^{ ( i,t,v ) } } c_{ S( p ) }
  = \sum_{ q \in \cP( D^2,v,k ) } c_q.
\end{equation}
It is immediate from the definitions that for $p$ in $\cP( D,i,k )^{ ( i,t,\neq v ) }$ we have
\[
  c_p = c_{ it }c_{ vt }^{ -1 }c_{ R( p ) }
\]
and for $p$ in $\cP( D,i,k )^{ ( i,t,v ) }$ we have
\[
  c_p = c_{ it }c_{ vt }^{ -1 }c_{ S( p ) }.
\]
Multiplying Equation \eqref{equ:lem:computation:b} by $c_{ it }c_{ vt }^{ -1 }$ hence gives
\[
  \sum_{ p \in \cP( D,i,k )^{ ( i,t,\neq v ) } } c_p
    + \sum_{ p \in \cP( D,i,k )^{ ( i,t,v ) } } c_p
  = c_{ it }c_{ vt }^{ -1 } \sum_{ q \in \cP( D^2,v,k ) } c_q,
\]
which clearly implies
\begin{equation}
\label{equ:lem:computation:c}
  \sum_{ p \in \cP( D,i,k )^{ ( i,t ) } } c_p
  = c_{ it }c_{ vt }^{ -1 } \sum_{ q \in \cP( D^2,v,k ) } c_q.
\end{equation}
Symmetrically, we have
\begin{equation}
\label{equ:lem:computation:d}
  \sum_{ p \in \cP( D,i,k )^{ ( i,v ) } } c_p
  = c_{ iv }c_{ tv }^{ -1 }
    \sum_{ q \in \cP( D^2,t,k ) } c_q.
\end{equation}
By \cite[lem.\ 3.7]{Canakci-Jorgensen}, the disjoint union of $\cP( D,i,k )^{ ( i,t ) }$ and $\cP( D,i,k )^{ ( i,v ) }$ is $\cP( D,i,k )$, so adding Equations \eqref{equ:lem:computation:c} and \eqref{equ:lem:computation:d} gives Equation \eqref{equ:lem:computation:a}.

Secondly, suppose that $i$ is in $U^2$ and $k$ is in $U^1$.  We will prove Equation \eqref{equ:lem:computation:u}.  By \cite[lem.\ 3.1(i)]{Canakci-Jorgensen} there is a bijection $\cP( D,i,k ) \xrightarrow{} \cP( D,k,i )$ given by $p \mapsto \overline{ p }$ with $\overline{ p } = ( p_{ \pi }, \ldots, p_1 )$ for each $T$-path $p = ( p_1, \ldots, p_{ \pi } )$.  We define $\overline{ c } : \diag P \xrightarrow{} R^{ \opp }$ by $\overline{ c }_{ ik } = c_{ ki }$, where $R^{ \opp }$ is the opposite ring with multiplication defined by $c \times d = dc$.  A straightforward computation shows
\begin{equation}
\label{equ:lem:computation:v}
  \overline{ c }_p = c_{ \overline{ p } }.
\end{equation}
We now prove Equation \eqref{equ:lem:computation:u} as follows.
\begin{align*}
  \sum_{ p \in \cP( D,i,k ) } c_p
  & \stackrel{ \rm (a) }{ = } \sum_{ r \in \cP( D,k,i ) } c_{ \overline{ r } } \\[2mm]
  & \stackrel{ \rm (b) }{ = } \sum_{ r \in \cP( D,k,i ) } \overline{ c }_r \\[2mm]
  & \stackrel{ \rm (c) }{ = }
  \overline{ c }_{ kt } \times \overline{ c }_{ vt }^{ -1 } \times \sum_{ s \in \cP( D^2,v,i ) } \overline{ c }_s
  +
  \overline{ c }_{ kv } \times \overline{ c }_{ tv }^{ -1 } \times \sum_{ s \in \cP( D^2,t,i ) } \overline{ c }_s \\[2mm]
  & \stackrel{ \rm (d) }{ = }
  \Bigg( \sum_{ s \in \cP( D^2,v,i ) } c_{ \overline{ s }} \Bigg) c_{ tv }^{ -1 } c_{ tk }
  +
  \Bigg( \sum_{ s \in \cP( D^2,t,i ) } c_{ \overline{ s } } \Bigg) c_{ vt }^{ -1 } c_{ vk } \\[2mm]
  & \stackrel{ \rm (e) }{ = }
  \Bigg( \sum_{ q \in \cP( D^2,i,v ) } c_q \Bigg) c_{ tv }^{ -1 } c_{ tk }
  +
  \Bigg( \sum_{ q \in \cP( D^2,i,t ) } c_q \Bigg) c_{ vt }^{ -1 } c_{ vk }
\end{align*}
Here (a) and (e) are by the bijection $p \mapsto \overline{p}$, and (b) is by Equation \eqref{equ:lem:computation:v}.  Equality (c) is by Equation \eqref{equ:lem:computation:a} with $i$ and $k$ swapped applied to $\overline{ c }$, and (d) is by $c \times d = dc$ and Equation \eqref{equ:lem:computation:v}.
\end{proof}

\begin{Lemma}
\label{lem:8}
Consider Setup \ref{set:P1P2} with $D^1 = \emptyset$ and let $c : \diag P \xrightarrow{} R$ be a map such  that $c_{ rs }$ is in $R^*$ if $( r,s )$ is in $D$.

Assume that:
\begin{itemize}
\setlength\itemsep{4pt}

  \item  $c \Big|_{ \diag P^2 }$ satisfies the $T$-path formula \eqref{equ:def:T-path_formula:a} with respect to $D^2$.
  
  \item  $c$ is a weak frieze with respect to $\{\, ( t,v ), ( v,t ) \,\}$.

\end{itemize}
Then $c$ satisfies the $T$-path formula from $i$ to $k$ with respect to $D$ when $i$ is in $U^1$ and $k$ is in $U^2$.
\end{Lemma}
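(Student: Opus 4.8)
The plan is to combine the purely combinatorial identity of Lemma \ref{lem:computation} with two algebraic facts that are immediately available: the exchange relation coming from the weak frieze hypothesis on the chord $\{\,(t,v),(v,t)\,\}$, and the $T$-path formula that, by assumption, already holds inside $P^2$.

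First I would invoke Equation \eqref{equ:lem:computation:a}. It applies verbatim in the present situation, since $D^1 = \emptyset$, we have $i$ in $U^1$ and $k$ in $U^2$, and $c_{rs}$ is in $R^*$ whenever $(r,s)$ is in $D$. This gives
\[
  \sum_{ p \in \cP( D,i,k ) } c_p
  =
  c_{ it }c_{ vt }^{ -1 } \sum_{ q \in \cP( D^2,v,k ) } c_q
  +
  c_{ iv }c_{ tv }^{ -1 } \sum_{ q \in \cP( D^2,t,k ) } c_q .
\]
Next I would use the hypothesis that $c \big|_{ \diag P^2 }$ satisfies the $T$-path formula with respect to $D^2$. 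The vertices $v$, $t$, $k$ all lie in $V^2$, and $k \neq t$, $k \neq v$ because $k$ is in $U^2 = V^2 \setminus \{\, t,v \,\}$; hence the formula applies to the pairs $(v,k)$ and $(t,k)$, yielding $\sum_{ q \in \cP( D^2,v,k ) } c_q = c_{ vk }$ and $\sum_{ q \in \cP( D^2,t,k ) } c_q = c_{ tk }$. Substituting turns the displayed identity into $\sum_{ p \in \cP( D,i,k ) } c_p = c_{ it }c_{ vt }^{ -1 }c_{ vk } + c_{ iv }c_{ tv }^{ -1 }c_{ tk }$.

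Finally, because $i$ is in $U^1$ and $k$ is in $U^2$ while $(t,v)$ is the chord separating $P^1$ from $P^2$, the diagonals $(i,k)$ and $(t,v)$ cross. Since $c$ is a weak frieze with respect to $\{\, (t,v),(v,t) \,\}$, Definition \ref{def:weak_frieze}(ii) supplies exactly the exchange relation $c_{ ik } = c_{ it }c_{ vt }^{ -1 }c_{ vk } + c_{ iv }c_{ tv }^{ -1 }c_{ tk }$. Comparing with the previous paragraph gives $c_{ ik } = \sum_{ p \in \cP( D,i,k ) } c_p$, which is the $T$-path formula from $i$ to $k$ with respect to $D$. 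There is no genuine obstacle: every step is either a citation of a result already established in the paper or an immediate substitution. The only points that need a word of justification are that the pairs $(v,k)$ and $(t,k)$ are indeed pairs of distinct vertices of $P^2$ (which follows from $k \in U^2$) and that $(i,k)$ crosses $(t,v)$ (which follows from $i \in U^1$, $k \in U^2$, and the defining property of the dissection $\{\, (t,v),(v,t) \,\}$).
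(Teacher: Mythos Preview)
Your proposal is correct and follows essentially the same approach as the paper's proof: invoke Equation~\eqref{equ:lem:computation:a} from Lemma~\ref{lem:computation}, use the $T$-path formula on $P^2$ to collapse the two inner sums to $c_{vk}$ and $c_{tk}$, and then apply the weak frieze exchange relation across $(t,v)$ to identify the right-hand side with $c_{ik}$. Your version is in fact slightly more explicit than the paper's in spelling out why $(v,k)$ and $(t,k)$ are legitimate diagonals of $P^2$ and why $(i,k)$ crosses $(t,v)$.
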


\begin{proof}
Let $i$ be in $U^1$ and $k$ be in $U^2$.  Lemma \ref{lem:computation} provides Equation \eqref{equ:lem:computation:a}.  Since $c \Big|_{ \diag P^2 }$ satisfies the $T$-path formula with respect to $D^2$, the equation says

\[
  \sum_{ p \in \cP( D,i,k ) } c_p
  =
  c_{ it }c_{ vt }^{ -1 }c_{ vk }
  +
  c_{ iv }c_{ tv }^{ -1 }c_{ tk },
\]
and since $c$ is a weak frieze with respect to $\{\, ( t,v ), ( v,t ) \,\}$, this equation says
\[
  \sum_{ p \in \cP( D,i,k ) } c_p
  =
  c_{ ik }
\]
as claimed.
\end{proof}

\section{Proofs of the Theorems \ref{thm:A}, \ref{thm:B}, \ref{thm:C}, \ref{thm:D}}
\label{sec:proofs}

\begin{proof}
[Proof of Theorem \ref{thm:A} (Non-commutative frieze determinants)]
We use induction on $n$, the number of vertices of the polygon $P$ on which the frieze $c$ is defined.

$n = 3$:  We have
\[
  M_c =
  \begin{pmatrix}
    0        & c_{ 12 } & c_{ 13 } \\[2mm]
    c_{ 21 } & 0        & c_{ 23 } \\[2mm]
    c_{ 31 } & c_{ 32 } & 0
  \end{pmatrix}.
\]
We flip rows one and two to get
\[
  M'_c =
  \begin{pmatrix}
    c_{ 21 } & 0        & c_{ 23 } \\[2mm]
    0        & c_{ 12 } & c_{ 13 } \\[2mm]
    c_{ 31 } & c_{ 32 } & 0
  \end{pmatrix},
\]
then perform two row operations by subtracting $c_{ 31 }c_{ 21 }^{ -1 }$ times row one and $c_{ 32 }c_{ 12 }^{ -1 }$ times row two from row three.  This gives
\[
  M''_c =
  \begin{pmatrix}
    c_{ 21 } & 0         & c_{ 23 } \\[2mm]
    0        & c_{ 12 }  & c_{ 13 } \\[2mm]
    0        & 0         & -c_{ 31 }c_{ 21 }^{ -1 }c_{ 23 }-c_{ 32 }c_{ 12 }^{ -1 }c_{ 13 }
  \end{pmatrix}.
\]
Using \cite[p.\ 153, parts a) and c) and thm.\ 4.4]{Artin-book} gives
\[
  \det M_c
  = \overline{ -1 } \cdot \det M''_c
  = \overline{ -1 } 
    \cdot \overline{ c_{ 21 } }
    \cdot \overline{ c_{ 12 } }
    \cdot \overline{ -c_{ 31 }c_{ 21 }^{ -1 }c_{ 23 }-c_{ 32 }c_{ 12 }^{ -1 }c_{ 13 } }
  = (*).
\]
The triangle relation \eqref{equ:def:frieze:a} gives the first of the following equalities, and the second holds because $( R^* / [ R^*,R^* ] ) \cupdot \{\, 0 \,\}$ is abelian.
\[
  (*)
  = \overline{ -1 } 
    \cdot \overline{ c_{ 21 } }
    \cdot \overline{ c_{ 12 } }
    \cdot \overline{ -2c_{ 31 }c_{ 21 }^{ -1 }c_{ 23 } }
  = \overline{ 2 } \cdot \overline{ c_{ 12 }c_{ 23 }c_{ 31 } }.
\]

$n \geqslant 4$: Assume the theorem holds for smaller values of $n$.  Since $c$ is a frieze, it is a weak frieze with respect to any dissection of $P$ as remarked in the introduction.  In particular, it is a weak frieze with respect to the dissection $\{\, ( 1,n-1 ), ( n-1,1 ) \,\}$, which divides $P$ into a subpolygon $P^1$ with vertex set $V^1$ numbered by $\{\, 1, \ldots, n-1 \,\}$ and a triangle $P^2$ with vertex set $V^2$ numbered by $\{\, 1, n-1, n \,\}$; see Figure \ref{fig:thm:A:a}.
\begin{figure}
\begingroup
\[
  \begin{tikzpicture}[scale=5.5/7]
      \node[name=s, shape=regular polygon, regular polygon sides=9, rotate=-10, minimum size=5.5cm, draw] {}; 
      \draw [thick] (0*360/9:3.5cm) to (2*360/9:3.5cm);
      \draw (2*360/9+12:3.8cm) node { $\cdot$ };
      \draw (2*360/9+20:3.8cm) node { $\cdot$ };
      \draw (2*360/9+28:3.8cm) node { $\cdot$ };      
      \draw (2*360/9:3.90cm) node { $1$ };
      \draw (1*360/9:3.90cm) node { $n$ };
      \draw (0*360/9:4.25cm) node { $n-1$ };
      \draw (0*360/9-12:3.8cm) node { $\cdot$ };
      \draw (0*360/9-20:3.8cm) node { $\cdot$ };
      \draw (0*360/9-28:3.8cm) node { $\cdot$ };                  
      \draw (0cm,0cm) node { $P^1$ };
      \draw (1*360/9:3.1cm) node { $P^2$ };      
  \end{tikzpicture} 
\]
\endgroup
\caption{In the proof of Theorem \ref{thm:A} the dissection $\{\, ( 1,n-1 ), ( n-1,1 ) \,\}$ divides the polygon $P$ into a subpolygon $P^1$ with vertex set $V^1$ numbered by $\{\, 1, \ldots, n-1 \,\}$ and a triangle $P^2$ with vertex set $V^2$ numbered by $\{\, 1, n-1, n \,\}$.}
\label{fig:thm:A:a}
\end{figure}
Set $c^{ \alpha } = c \Big|_{ \diag P^{ \alpha } }$ for $\alpha$ in $\{\, 1,2 \,\}$.  Then
\begin{align*}
  \det M_c
  & \stackrel{ \rm (a) }{ = } \overline{ -1 }
      \cdot \overline{ c_{ 1,n-1 }^{ -1 }c_{ n-1,1 }^{ -1 } }
      \cdot \det( M_{ c^1 } )
      \cdot \det( M_{ c^2 } ) \\[2mm]
  & \stackrel{ \rm (b) }{ = } \overline{ -1 }
      \cdot \overline{ c_{ 1,n-1 }^{ -1 }c_{ n-1,1 }^{ -1 } }
      \cdot \Big( \overline{ -( -2 )^{ n-3 } }
      \cdot \overline{ c_{ 12 }c_{ 23 } \ldots c_{ n-2,n-1 }c_{ n-1,1 } } \Big)
      \cdot \Big( \overline{ 2 }
      \cdot \overline{ c_{ 1,n-1 }c_{ n-1,n }c_{ n1 } }  \Big) \\[2mm]
  & = \overline{ -( -2 )^{ n-2 } }
      \cdot 
      \overline{ c_{ 12 }c_{ 23 } \cdots c_{ n-1,n }c_{ n1 } }
\end{align*}
as desired, where (a) is by Proposition \ref{pro:A} and (b) is by the theorem at values $n-1$ and $3$.
\end{proof}

\begin{proof}
[Proof of Theorem \ref{thm:B} (The non-commutative Laurent phenomenon)]
We start with an observation on the $T$-path formula stated in Theorem \ref{thm:B} and Equation \eqref{equ:def:T-path_formula:a} in Definition \ref{def:T-path_formula}: If the diagonal $( i,k )$ crosses no diagonal in the dissection in the formula, then the formula holds trivially for any map $c$, because the unique $T$-path from $i$ to $k$ with respect to the dissection is $( i,k )$.

To prove that condition (i) in the theorem implies condition (ii), we use induction on $|D|$.

$|D| = 0$:  Let $c : \diag P \xrightarrow{} R$ satisfy condition (i), which in this case is vacuous.  The condition stated after the formula in (ii) is vacuously true.  To show that $c$ satisfies condition (ii), it suffices to note that each diagonal $( i,k )$ in $P$ crosses no diagonal in $D$ and apply the observation at the start of the proof.

$|D| > 0$:  Assume the implication (i)$\Rightarrow$(ii) holds for smaller values of $|D|$.  Let $c : \diag P \xrightarrow{} R$ satisfy condition (i).  The condition stated after the formula in (ii) holds by Definition \ref{def:weak_frieze}(i).  To show that $c$ satisfies condition (ii), it suffices to let $i$ and $k$ be different vertices of $P$ and prove that $c$ satisfies the $T$-path formula from $i$ to $k$ with respect to $D$.

If $( i,k )$ crosses no diagonal in $D$, then we are done by the observation at the start of the proof.

Assume that $( i,k )$ crosses at least one diagonal in $D$.  Repeated application of \cite[lem.\ 3.4]{Canakci-Jorgensen} shows that it is enough to prove that $c$ satisfies the $T$-path formula from $i$ to $k$ with respect to $D$ after the following substitutions illustrated by Figure \ref{fig:thm:B:a}.
\begin{figure}
\begingroup
\[
  \begin{tikzpicture}[scale=1]
      \node[name=s, shape=regular polygon, regular polygon sides=20, rotate=9, minimum size=8cm, draw] {}; 
      \draw [dotted,thick] (11*360/20:4cm) to (1*360/20:4cm);
      \draw (11*360/20:4.3cm) node { $i$ };
      \draw (1*360/20:4.3cm) node { $k$ };
      \draw (8*360/20:4.3cm) node { $t$ };
      \draw (12*360/20:4.3cm) node { $v$ };
      \draw (10*360/20:3.6cm) node { $P^1$ };
      \draw [thick,red] (8*360/20:4cm) to (12*360/20:4cm);
      \draw [thick,red] (8*360/20:4cm) to (14*360/20:4cm);
      \draw [thick,red] (2*360/20:4cm) to (0*360/20:4cm);
      \draw [thick,red] (3*360/20:4cm) to (-2*360/20:4cm);

      \draw [thick] (12*360/20:4cm) to (14*360/20:4cm);
      \draw [thick] (15*360/20:4cm) to (18*360/20:4cm);
      \draw [thick] (4*360/20:4cm) to (8*360/20:4cm);
      \draw [thick] (5*360/20:4cm) to (8*360/20:4cm);
      \draw [thick] (5*360/20:4cm) to (7*360/20:4cm);

      \draw [draw=black, fill=red, opacity=0.1] (8*360/20:4cm) -- (9*360/20:4cm) --(10*360/20:4cm) -- (11*360/20:4cm) -- (12*360/20:4cm) -- cycle;
      \draw [draw=black, fill=red, opacity=0.1] (8*360/20:4cm) -- (12*360/20:4cm) --(14*360/20:4cm) -- cycle;
      \draw [draw=black, fill=red, opacity=0.1] (8*360/20:4cm) -- (14*360/20:4cm) -- (15*360/20:4cm) -- (18*360/20:4cm) -- (3*360/20:4cm) -- (4*360/20:4cm) -- cycle;
      \draw [draw=black, fill=red, opacity=0.1] (2*360/20:4cm) -- (3*360/20:4cm) -- (18*360/20:4cm) -- (19*360/20:4cm) -- (20*360/20:4cm) -- cycle;
      \draw [draw=black, fill=red, opacity=0.1] (0*360/20:4cm) -- (1*360/20:4cm) -- (2*360/20:4cm) -- cycle;

  \end{tikzpicture} 
\]
\endgroup
\caption{This polygon, $P$, has a dissection, $D$, consisting of all the bold diagonals (red and black).  In the proof of Theorem \ref{thm:B}, a diagonal $( i,k )$ is given, and $P$ is replaced by the union of those subpolygons defined by $D$ which $( i,k )$ passes through (pink), while $D$ is replaced by the set of those diagonals in the original $D$ which $( i,k )$ crosses (red).}
\label{fig:thm:B:a}
\end{figure}
\begin{itemize}
\setlength\itemsep{4pt}

  \item  $P$ is replaced by the union of those subpolygons defined by $D$ which $( i,k )$ passes through.

  \item  $c$ is replaced by its restriction to the set of diagonals of the new $P$.

  \item  $D$ is replaced by the set of those diagonals in the original $D$ which $( i,k )$ crosses.  

\end{itemize}
The substitutions establish the situation of Setup \ref{set:P1P2} with $P^1$ being the first subpolygon which $( i,k )$ passes through, $( t,v )$ and $( v,t )$  being the first diagonals in $D$ which $( i,k )$ crosses, and $D^1 = \emptyset$.  Moreover, $i$ is in $U^1$ and $k$ is in $U^2$.

After the substitutions, $c$ still satisfies condition (i).  That is, $c$ is a weak frieze with respect to $D$.  It follows that $c \Big|_{ \diag P^2 }$ satisfies condition (i) with $D^2$ instead of $D$.  Since $D = \{\, ( t,v ), ( v,t ) \,\} \cupdot D^2$ we have $|D^2| < |D|$ so by induction $c \Big|_{ \diag P^2 }$ satisfies condition (ii) with $D^2$ instead of $D$.  That is, $c \Big|_{ \diag P^2 }$ satisfies the $T$-path formula with respect to $D^2$.

This verifies one of the assumptions of Lemma \ref{lem:8}, and the remaining assumptions of the lemma hold because $c$ is a weak frieze with respect to $D$.  So the lemma gives that $c$ satisfies the $T$-path formula from $i$ to $k$ with respect to $D$ as desired.

To prove that condition (ii) in the theorem implies condition (i), we also use induction on $|D|$.

$|D| = 0$:  In this case, condition (i) is vacuously true.

$|D| > 0$:  Assume the implication (ii)$\Rightarrow$(i) holds for smaller values of $|D|$.  Let $c : \diag P \xrightarrow{} R$ satisfy condition (ii).

We can pick diagonals $( t,v )$ and $( v,t )$ in $D$ which place us in the situation of Setup \ref{set:P1P2} with $D^1 = \emptyset$ and the given $D$ equal to $\{\, ( t,v ), ( v,t ) \,\} \cupdot D^2$.  Note that $P^1$ is an ``ear'' of the dissection $D$. 

The restriction $c \Big|_{ \diag P^1 }$ vacuously satifies condition (i) with the dissection $D^1 = \emptyset$ instead of $D$.  It follows from \cite[lem.\ 3.4]{Canakci-Jorgensen} that $c \Big|_{ \diag P^2 }$ satisfies condition (ii) with $D^2$ instead of $D$.  We have $|D^2| < |D|$, so by induction $c \Big|_{ \diag P^2 }$ satisfies condition (i) with $D^2$ instead of $D$.  To sum up, $c \Big|_{ \diag P^{ \alpha } }$ is a weak frieze with respect to $D^{ \alpha }$ for each $\alpha$ in $\{\, 1,2 \,\}$.  Moreover, $c_{ tv }$ and $c_{ vt }$ are in $R^*$ by condition (ii). 

Setting 
\begin{equation}
\label{equ:thm:B:1}
  c^{ \alpha } = c \Big|_{ \diag P^{ \alpha } }
\end{equation}  
for each $\alpha$ in $\{\, 1,2 \,\}$ hence places us in the situation of Lemma \ref{lem:B}, and we will prove that $c$ satisfies condition (i) by showing that $c$ is equal to the weak frieze with respect to $D$ provided by that lemma.

We do so by proving that $c_{ ik }$ is given by the table in Lemma \ref{lem:B}.  In most cases, this holds by Equation \eqref{equ:thm:B:1}.  The only cases not covered are the following.
\begingroup
\renewcommand{\labelenumi}{(\alph{enumi})}
\begin{enumerate}
\setlength\itemsep{4pt}

  \item  $i$ is in $U^1$ and $k$ is in $U^2$.
  
  \item  $i$ is in $U^2$ and $k$ is in $U^1$.

\end{enumerate}
\endgroup

We know that $c$ satisfies condition (ii); in particular, the condition stated after the formula in (ii) ensures that Lemma \ref{lem:computation} can be applied.  We also know that $c \Big|_{ \diag P^2 }$ satisfies condition (ii) with $D^2$ instead of $D$.  Hence, in case (a), Equation \eqref{equ:lem:computation:a} reads
\[
  c_{ ik }
  =
  c_{ it }c_{ vt }^{ -1 }c_{ vk }
  +
  c_{ iv }c_{ tv }^{ -1 }c_{ tk },
\]
and in case (b), Equation \eqref{equ:lem:computation:u} reads the same.  Combining with Equation \eqref{equ:thm:B:1} shows that in case (a), respectively (b), $c_{ ik }$ is indeed given by the top right, respectively bottom left entry in the table in Lemma \ref{lem:B}.
\end{proof}

\begin{proof}
[Proof of Theorem \ref{thm:C} (Gluing weak friezes)]  The general case follows by an easy induction from the case $\kappa = 1$, which holds by Lemma \ref{lem:B}.
\end{proof}

\begin{proof}
[Proof of Theorem \ref{thm:D} (Gluing friezes)]
We will assume $\kappa = 1$ since the general case follows from this by an easy induction.  The dissection \eqref{equ:thm:D:1} in the theorem will be written $\{\, ( t,v ), ( v,t ) \,\}$.  It divides $P$ into subpolygons $P^1$ and $P^2$ on which we have friezes $c^1$ and $c^2$, and the map $c$ from Theorem \ref{thm:C} is the map $c$ from Lemma \ref{lem:B}, applied with $D^1$ and $D^2$ equal to $\emptyset$.

We start with the observation that if, on the other hand, $D^1$ and $D^2$ are arbitrary dissections of $P^1$ and $P^2$, then $c$ is a weak frieze with respect to $\{\, ( t,v ), ( v,t ) \,\} \cupdot D^1 \cupdot D^2$.  This follows from Lemma \ref{lem:B} because $c^1$ and $c^2$ are weak friezes with respect to $D^1$ and $D^2$.  In fact, since $c^1$ and $c^2$ are friezes, they are weak friezes with respect to any dissections of $P^1$ and $P^2$ as remarked in the introduction.

Let $V^1$ and $V^2$ be the vertex sets of $P^1$ and $P^2$.  We use induction on $|V^2|$.

$|V^2| = 3$:  The vertices shared by $P^1$ and $P^2$ are $t$ and $v$.  Denote the third vertex of $P^2$ by $s$.  By symmetry, we can assume $s < t < v$ in the cyclic ordering of the vertices of $P$, hence also in the cyclic ordering of the vertices of $P^2$; see Figure \ref{fig:thm:D:a}.  We must show that $c$ satisfies Definition \ref{def:frieze}.  Note that $c$ has values in $R^*$ by assumption.

$c$ satisfies the triangle relations in Definition \ref{def:frieze}(i): It is enough to show that $c$ satisfies Lemma \ref{lem:frieze}(i'), so we must prove that if $i < j < k$ are pairwise different vertices of $P$, then the triangle relation
\[
  c_{ ij }c_{ kj }^{ -1 }c_{ ki } = c_{ ik }c_{ jk }^{ -1 }c_{ ji }
\]
holds.  This is true by assumption if $i$, $j$, $k$ all sit in $V^1$ or all sit in $V^2$, so assume not.  Then one of $i$, $j$, $k$ is $s$ and the remaining two vertices are in $V^1$ but not both in $V^2$.  By \cite[rmk.\ 2.2(2)]{Cuntz-Holm-Jorgensen-noncomm} we can assume $i = s$, 
so we must prove that if $s < j < k$ are pairwise different vertices of $P$ with $j$, $k$ not both in $V^2$, then
\[
  c_{ sj }c_{ kj }^{ -1 }c_{ ks } = c_{ sk }c_{ jk }^{ -1 }c_{ js }.
\]
By symmetry we can assume $j$ not in $V^2$; in particular, $j$ is not in $\{\, t,v \,\}$.  We have $k \neq t$ since the successor of $s$ is $t$ while $s < j < k$.  We can compute as follows where red factors are replaced in each subsequent line.
\begin{align*}
  &
  { \color{red} c_{ sj } }c_{ kj }^{ -1 }c_{ ks } -
  c_{ sk }c_{ jk }^{ -1 }{ \color{red} c_{ js } } \\[2mm]
  & \;\;\;\; \stackrel{ \rm (a) }{ = }
  ( c_{ st }c_{ kt }^{ -1 }c_{ kj } +
    c_{ sk }c_{ tk }^{ -1 }c_{ tj } )
          c_{ kj }^{ -1 }c_{ ks } -
  c_{ sk }c_{ jk }^{ -1 }
  ( c_{ jk }c_{ tk }^{ -1 }c_{ ts } +
    c_{ jt }c_{ kt }^{ -1 }c_{ ks } ) \\[2mm]
  & \;\;\;\; =
  c_{ st }c_{ kt }^{ -1 }c_{ ks } +
  c_{ sk }{ \color{red} c_{ tk }^{ -1 }c_{ tj }c_{ kj }^{ -1 } }c_{ ks } -
  c_{ sk }c_{ tk }^{ -1 }c_{ ts } -
  c_{ sk }c_{ jk }^{ -1 }c_{ jt }c_{ kt }^{ -1 }c_{ ks } \\[2mm]
  & \;\;\;\; \stackrel{ \rm (b) }{ = }
  c_{ st }c_{ kt }^{ -1 }c_{ ks } +
  c_{ sk }c_{ jk }^{ -1 }c_{ jt }c_{ kt }^{ -1 }c_{ ks } -
  c_{ sk }c_{ tk }^{ -1 }c_{ ts } -
  c_{ sk }c_{ jk }^{ -1 }c_{ jt }c_{ kt }^{ -1 }c_{ ks } \\[2mm]
  & \;\;\;\; =
  c_{ st }c_{ kt }^{ -1 }c_{ ks } -
  c_{ sk }c_{ tk }^{ -1 }c_{ ts } \\[2mm]
  & \;\;\;\; = (*).
\end{align*}
We explain the labelled equalities.  For equality (a), observe that $s < j < k$ and that $s$ and $t$ are consecutive vertices of $P$ while $j \neq t$, so we even have
\begin{equation}
\label{equ:thm:D:u}
  s < t < j < k.
\end{equation}
The observation near the start of the proof with $D^1 = \{\, ( t,k ),( k,t ) \,\}$ and $D^2 = \emptyset$ says that $c$ is a weak frieze with respect to $\{\, ( t,v ), ( v,t ), ( t,k ), ( k,t ) \,\}$ so \eqref{equ:thm:D:u} gives exchange relations which imply (a).  Equality (b) follows from the inverted triangle relation in Lemma \ref{lem:inverted_triangle_relations} for the pairwise different vertices $t$, $j$, $k$ of $P^1$ since $c \Big|_{ \diag P^1 } = c^1$ is a frieze.

If $k = v$ then
\[
  (*) =
  c_{ st }c_{ vt }^{ -1 }c_{ vs } -
  c_{ sv }c_{ tv }^{ -1 }c_{ ts } = 0
\]
where the last equality holds because $c \Big|_{ \diag P^2 } = c^2$ is a frieze.

If $k \neq v$ then we can compute as follows where red factors are replaced in each subsequent line.
\begin{align*}
  (*) & =
  c_{ st }c_{ kt }^{ -1 }{ \color{red} c_{ ks } } -
  { \color{red} c_{ sk } }c_{ tk }^{ -1 }c_{ ts } \\[2mm]
  & \stackrel{ \rm (c) }{ = }
  c_{ st }c_{ kt }^{ -1 }
  ( c_{ kv }c_{ tv }^{ -1 }c_{ ts } + 
    c_{ kt }c_{ vt }^{ -1 }c_{ vs } ) -
  ( c_{ st }c_{ vt }^{ -1 }c_{ vk } +
    c_{ sv }c_{ tv }^{ -1 }c_{ tk } )
  c_{ tk }^{ -1 }c_{ ts } \\[2mm]
  & =
  c_{ st }{ \color{red} c_{ kt }^{ -1 }
  c_{ kv }c_{ tv }^{ -1 } }c_{ ts } + 
  { \color{red} c_{ st }c_{ vt }^{ -1 }c_{ vs } } -
  c_{ st }c_{ vt }^{ -1 }c_{ vk }c_{ tk }^{ -1 }c_{ ts } -
  c_{ sv }c_{ tv }^{ -1 }c_{ ts } \\[2mm]
  & \stackrel{ \rm (d) }{ = }
  c_{ st }c_{ vt }^{ -1 }
  c_{ vk }c_{ tk }^{ -1 }c_{ ts } + 
  c_{ sv }c_{ tv }^{ -1 }c_{ ts } -
  c_{ st }c_{ vt }^{ -1 }c_{ vk }c_{ tk }^{ -1 }c_{ ts } -
  c_{ sv }c_{ tv }^{ -1 }c_{ ts } \\[2mm]
  & = 0.
\end{align*}
We explain the labelled equalities.  For equality (c), observe that $k \neq v$ implies that $k$ is not in $\{\, t,v \,\}$.  Since $s < t < v$, we must have $s < t < k < v$.  This gives exchange relations because $c$ is a weak frieze with respect to $\{\, ( t,v ),( v,t ) \,\}$, and these relations imply (c).  Equality (d) follows from the inverted triangle relation in Lemma \ref{lem:inverted_triangle_relations} for the pairwise different vertices $t$, $v$, $k$ of $P^1$ since $c \Big|_{ \diag P^1 } = c^1$ is a frieze and from the triangle relation for the pairwise different vertices $s$, $t$, $v$ of $P^2$ since $c \Big|_{ \diag P^2 } = c^2$ is a frieze.

$c$ satisfies the exchange relations in Definition \ref{def:frieze}(ii): Since $c \Big|_{ \diag P^1 } = c^1$ is a frieze, $c$ satisfies all exchange relations between diagonals in $P^1$.  The remaining exchange relations involve an internal diagonal not in $P^1$, and such diagonals have $s$ as one of their end points, so we must show that if
\begin{equation}
\label{equ:thm:D:v}
  s < j < k < \ell
\end{equation}  
are pairwise different vertices of $P$, then the corresponding exchange relations are satisfied. 

First, the observation near the start of the proof with $D^1 = \{\, ( j,\ell ),( \ell,j ) \,\}$ and $D^2 = \emptyset$ says that $c$ is a weak frieze with respect to $\{\, ( t,v ), ( v,t ), ( j,\ell ),( \ell,j ) \,\}$ so \eqref{equ:thm:D:v} gives the following exchange relations.
\begin{align}
\label{equ:thm:D:a}
  c_{ sk } 
  & = c_{ sj }c_{ \ell j }^{ -1 }c_{ \ell k }
    + c_{ s\ell }c_{ j\ell }^{ -1 }c_{ jk } \\[2mm]
\nonumber
  c_{ ks }
  & = c_{ k\ell }c_{ j\ell }^{ -1 }c_{ js }
    + c_{ kj }c_{ \ell j }^{ -1 }c_{ \ell s }
\end{align}

Secondly, we prove the exchange relation
\[
  c_{ j\ell } 
  = c_{ jk }c_{ sk }^{ -1 }c_{ s\ell }
  + c_{ js }c_{ ks }^{ -1 }c_{ k\ell }
\]
by computing as follows where red factors are replaced in each subsequent line.
\begin{align*}
  & 
  ( c_{ j\ell }
    - c_{ jk }c_{ sk }^{ -1 }c_{ s\ell }
    - c_{ js }c_{ ks }^{ -1 }c_{ k\ell } )
  c_{ s\ell }^{ -1 }c_{ sk } \\[2mm]
  & \;\;\;\; =
  c_{ j\ell }c_{ s\ell }^{ -1 }{ \color{red} c_{ sk } }
  - c_{ jk }
  - c_{ js }c_{ ks }^{ -1 }c_{ k\ell }c_{ s\ell }^{ -1 }c_{ sk } \\[2mm]
  & \;\;\;\; \stackrel{ \rm (e) }{ = }
  c_{ j\ell }c_{ s\ell }^{ -1 }  
  ( c_{ sj }c_{ \ell j }^{ -1 }c_{ \ell k }
    + c_{ s\ell }c_{ j\ell }^{ -1 }c_{ jk } )
  - c_{ jk }
  - c_{ js }c_{ ks }^{ -1 }c_{ k\ell }c_{ s\ell }^{ -1 }c_{ sk } \\[2mm]
  & \;\;\;\; =
  c_{ j\ell }c_{ s\ell }^{ -1 }c_{ sj }c_{ \ell j }^{ -1 }c_{ \ell k }
  + c_{ jk }
  - c_{ jk }
  - c_{ js }c_{ ks }^{ -1 }c_{ k\ell }c_{ s\ell }^{ -1 }c_{ sk } \\[2mm]
  & \;\;\;\; =
  c_{ j\ell }{ \color{red} c_{ s\ell }^{ -1 }c_{ sj }c_{ \ell j }^{ -1 } }c_{ \ell k }
  - c_{ js }{ \color{red} c_{ ks }^{ -1 }c_{ k\ell }c_{ s\ell }^{ -1 } }c_{ sk } \\[2mm]
  & \;\;\;\; \stackrel{ \rm (f) }{ = }
  c_{ j\ell }c_{ j\ell }^{ -1 }c_{ js }c_{ \ell s }^{ -1 }c_{ \ell k }
  - c_{ js }c_{ \ell s }^{ -1 }c_{ \ell k }c_{ sk }^{ -1 }c_{ sk } \\[2mm]
  & \;\;\;\; =
  c_{ js }c_{ \ell s }^{ -1 }c_{ \ell k }
  - c_{ js }c_{ \ell s }^{ -1 }c_{ \ell k } \\[2mm]
  & \;\;\;\; = 0.
\end{align*}
We explain the labelled equalities.  Equality (e) holds by Equation \eqref{equ:thm:D:a}.  Equality (f) follows from the inverted triangle relation in Lemma \ref{lem:inverted_triangle_relations} for the pairwise different vertices $s$, $j$, $\ell$, respectively $s$, $k$, $\ell$ because all triangle relations were established for $c$ earlier in the proof.

Thirdly, the exchange relation
\[
  c_{ \ell j } 
  = c_{ \ell s }c_{ ks }^{ -1 }c_{ kj }
  + c_{ \ell k }c_{ sk }^{ -1 }c_{ sj }
\]
follows by an analogous computation.

$|V^2| \geqslant 4$:  Recall that we are proving the theorem with $\kappa = 1$ by induction on $| V^2 |$ and that $c$ is glued from $c^1$ and $c^2$ using Theorem \ref{thm:C}.  Assume the theorem with $\kappa = 1$ holds for smaller values of $|V^2|$.

Pick vertices $t'$ and $v'$ of $P^2$ such that $\{\, ( t',v' ), ( t',v' ) \,\}$ is a dissection of $P^2$ into subpolygons $P^{2 \prime }$ and $P^{2 \prime \prime }$ with numbers of vertices satisfying $|V^{2 \prime }| < |V^2|$ and $|V^{2 \prime \prime }| < |V^2|$, and such that $P^{ 2 \prime }$ shares the edges $( t,v )$ and $( v,t )$ with $P^1$; see Figure \ref{fig:thm:D:b}.
\begin{figure}
\begingroup
\[
  \begin{tikzpicture}[scale=5.5/7]
      \node[name=s, shape=regular polygon, regular polygon sides=9, rotate=-10, minimum size=5.5cm, draw] {}; 
      \draw [thick] (0*360/9:3.5cm) to (2*360/9:3.5cm);
      \draw (2*360/9:3.85cm) node { $t$ };
      \draw (1*360/9:3.85cm) node { $s$ };
      \draw (0*360/9:3.8cm) node { $v$ };
      \draw (0cm,0cm) node { $P^1$ };
      \draw (1*360/9:3.1cm) node { $P^2$ };      
  \end{tikzpicture} 
\]
\endgroup
\caption{In the proof of Theorem \ref{thm:D}, in the induction start, the dissection $\{\, ( t,v ), ( v,t ) \,\}$ divides the polygon $P$ into a subpolygon $P^1$ and a triangle $P^2$ whose third vertex is denoted $s$.}
\label{fig:thm:D:a}
\end{figure}
\begin{figure}
\begingroup
\[
  \begin{tikzpicture}[scale=0.7]
      \node[name=s, shape=regular polygon, regular polygon sides=20, rotate=-10, minimum size=4.9cm, draw] {}; 
      \draw [thick] (6*360/20:3.5cm) to (17*360/20:3.5cm);
      \draw [thick] (5*360/20:3.5cm) to (19*360/20:3.5cm);
      \draw (6*360/20:3.8cm) node { $t$ };
      \draw (17*360/20:3.8cm) node { $v$ };
      \draw (5*360/20:3.8cm) node { $t'$ };
      \draw (19*360/20:3.8cm) node { $v'$ };
      \draw (11.5*360/20:1.1cm) node { $P^1$ };
      \draw (2*360/20:1.4cm) node { $P^{ 2 \prime }$ };
      \draw (2*360/20:2.8cm) node { $P^{ 2 \prime \prime }$ };
  \end{tikzpicture} 
\]
\endgroup
\caption{In the proof of Theorem \ref{thm:D}, in the induction step, the dissection $\{\, ( t,v ), ( v,t ) \,\}$ divides the polygon $P$ into subpolygons $P^1$ and $P^2$, and the dissection $\{\, ( t',v' ), ( v',t' ) \,\}$ divides $P^2$ into subpolygons $P^{ 2 \prime }$ and $P^{ 2 \prime \prime }$.}
\label{fig:thm:D:b}
\end{figure}
 We have the following:
\begin{itemize}
\setlength\itemsep{4pt}

  \item  $P^1 \cup P^{ 2 \prime }$ is a polygon divided into subpolygons $P^1$ and $P^{ 2 \prime }$ by the dissection
\[  
  \{\, ( t,v ), ( v,t ) \,\}.
\]
  
  \item  $c^1 : \diag P^1 \xrightarrow{} R^*$ and $c^{ 2 \prime } = c^2 \Big|_{ \diag P^{ 2 \prime } } : \diag P^{ 2 \prime } \xrightarrow{} R^*$ are friezes.
  
  \item  $c^1_{ tv } = c^{ 2 \prime }_{ tv }$ and $c^1_{ vt } = c^{ 2 \prime }_{ vt }$.

\end{itemize}
Theorem \ref{thm:C} can be applied to this data, and it is clear that $c' = c \Big|_{ \diag P^1 \cup P^{ 2 \prime } } : \diag P^1 \cup P^{ 2 \prime } \xrightarrow{} R$ is the resulting unique map which satisfies:
\begin{enumerate}
\setlength\itemsep{4pt}

  \item  $c' \Big|_{ \diag P^1 } = c^1$ and $c' \Big|_{ \diag P^{ 2 \prime } } = c^{ 2 \prime }$.

  \item  $c'$ is a weak frieze with respect to the dissection $\{\, ( t,v ), ( v,t ) \,\}$.

\end{enumerate}
\medskip
We have assumed that $c$ has values in $R^*$, so the same applies to $c'$ whence $c'$ is a frieze by induction because $|V^{2 \prime }| < |V^2|$.

We also have the following:
\begin{itemize}
\setlength\itemsep{4pt}

  \item  $P$ is a polygon divided into subpolygons $P^1 \cup P^{ 2 \prime }$ and $P^{ 2 \prime \prime }$ by the dissection
\[
  \{\, ( t',v' ), ( v',t' ) \,\}.
\]
  
  \item  $c' = c \Big|_{ \diag P^1 \cup P^{ 2 \prime } } : \diag P^1 \cup P^{ 2 \prime } \xrightarrow{} R^*$ and $c^{ 2 \prime \prime } = c^2 \Big|_{ \diag P^{ 2 \prime \prime } } : \diag P^{ 2 \prime \prime } \xrightarrow{} R^*$ are friezes.
  
  \item  $c'_{ t' v' } = c^{ 2 \prime \prime }_{ t' v' }$ and $c'_{ v' t' } = c^{ 2 \prime \prime }_{ v' t' }$.

\end{itemize}
Theorem \ref{thm:C} can be applied to this data, and we claim that $c : \diag P \xrightarrow{} R$ is the resulting unique map which satisfies:
\begingroup
\renewcommand{\labelenumi}{(\roman{enumi}')}
\begin{enumerate}
\setlength\itemsep{4pt}

  \item  $c \Big|_{ \diag P^1 \cup P^{ 2 \prime } } = c'$ and $c \Big|_{ \diag P^{ 2 \prime \prime } } = c^{ 2 \prime \prime }$.

  \item  $c$ is a weak frieze with respect to the dissection $\{\, ( t',v' ), ( v',t' ) \,\}$.

\end{enumerate}
\endgroup
Item (i') is clear and (ii') holds by the observation near the start of the proof with $D^2 = \{\, ( t',v' ), ( v',t' ) \,\}$ by which $c$ is a weak frieze with respect to $\{\, ( t,v ), ( v,t ), ( t',v' ), ( v',t' ) \,\}$.  We have assumed that $c$ has values in $R^*$, whence $c$ is a frieze by induction because $|V^{2 \prime \prime }| < |V^2|$.
\end{proof}

\medskip
\noindent
{\bf Acknowledgement.}
The third author gratefully acknowledges many years of support and hospitality from the Institut f\"ur Algebra, Zahlentheorie und Dis\-krete Mathematik at the Leibniz Universit\"at Hannover.

We thank Karin Baur for comments to a previous version.

This work was supported by a DNRF Chair from the Danish National Research Foundation (grant DNRF156), by a Research Project 2 from the Independent Research Fund Denmark (grant 1026-00050B), and by Aarhus University Research Foundation (grant AUFF-F-2020-7-16).

\end{document}